\newtheorem{thm}{Theorem}[section]
\newtheorem{cor}[thm]{Corollary}
\newtheorem{prop}[thm]{Proposition}
\newtheorem{lem}[thm]{Lemma}
\newtheorem{claim}[thm]{Claim}
\newtheorem{constr}[thm]{Construction}
\theoremstyle{definition}
\newtheorem{dfn}[thm]{Definition}
\newtheorem{fact}[thm]{Fact}
\theoremstyle{remark}
\newtheorem{rem}[thm]{Remark}
\newcommand{\Ob}{\mathrm{Ob}}
\newcommand{\ppr}{^{\prime}}
\newcommand{\TamG}{\mathit{Tam}(G)}
\newcommand{\STamG}{\mathit{STam}(G)}
\newcommand{\MackG}{\mathit{Mack}(G)}
\newcommand{\SMackG}{\mathit{SMack}(G)}
\newcommand{\Gs}{{}_G\mathit{set}}
\newcommand{\C}{\mathscr{C}}
\newcommand{\I}{\mathscr{I}}
\newcommand{\id}{\mathrm{id}}
\newcommand{\pr}{\mathrm{pr}}
\newcommand{\colim}{\mathrm{colim}\,}
\newcommand{\A}{\mathcal{A}}
\newcommand{\T}{\mathcal{T}}
\newcommand{\ST}{\mathcal{S}}
\newcommand{\efr}{\mathfrak{e}}
\newcommand{\dfr}{\mathfrak{d}}
\newcommand{\sfr}{\mathfrak{s}}
\newcommand{\kna}{(E,\kappa)}
\newcommand{\knap}{(E\ppr,\kappa\ppr)}
\newcommand{\knaf}{(E,\upsilon\circ\kappa)}
\newcommand{\lna}{(D,\lambda)}
\newcommand{\sna}{(S,\sigma)}
\newcommand{\klna}{(E\amalg D, \kappa\amalg \lambda)}
\newcommand{\Sl}{\mathscr{S}}
\newcommand{\Hs}{{}_H\mathit{set}}
\newcommand{\Uc}{U\underset{G}{\circ}}
\newcommand{\cU}{{\circ}\, U}
\newcommand{\U}{\mathscr{U}}
\newcommand{\Add}{\mathit{Add}}
\newcommand{\Fun}{\mathit{Fun}}
\newcommand{\Nat}{\mathit{Nat}}
\newcommand{\Sett}{\mathit{Set}}
\newcommand{\TamH}{\mathit{Tam}(H)}
\newcommand{\STamH}{\mathit{STam}(H)}
\newcommand{\SMackH}{\mathit{SMack}(H)}
\newcommand{\MackH}{\mathit{Mack}(H)}
\numberwithin{equation}{section}
\begin{document}

\title[Biset transformations of Tambara functors]{Biset transformations of Tambara functors}

\author{Hiroyuki NAKAOKA}
\address{Department of Mathematics and Computer Science, Kagoshima University, 1-21-35 Korimoto, Kagoshima, 890-0065 Japan}

\email{nakaoka@sci.kagoshima-u.ac.jp}

\thanks{The author wishes to thank Professor Serge Bouc for his comments and advices}
\thanks{The author wishes to thank Professor Fumihito Oda for his suggestion}
\thanks{The author also wishes to thank the referee for his precise comments and advices}
\thanks{This work is supported by JSPS Grant-in-Aid for Young Scientists (B) 22740005, JSPS Grant-in-Aid for Scientific Research (C) 24540085}

\begin{abstract}
If we are given an $H$-$G$-biset $U$ for finite groups $G$ and $H$, then any Mackey functor on $G$ can be transformed by $U$ into a Mackey functor on $H$. In this article, we show that the biset transformation is also applicable to Tambara functors when $U$ is right-free, and in fact forms a functor between the category of Tambara functors on $G$ and $H$. This biset transformation functor is compatible with some algebraic operations on Tambara functors, such as ideal quotients or fractions. In the latter part, we also construct the left adjoint of the biset transformation.
\end{abstract}

\maketitle

\section{Introduction and Preliminaries}

Let $G$ and $H$ be arbitrary finite groups. By definition, an $H$-$G$-{\it biset} $U$ is a set $U$ with a left $H$-action and a right $G$-action, which satisfy
\[ (hu)g=h(ug) \]
for any $h\in H,u\in U,g\in G$ (\cite{Bouc_Biset}). In this article, an $H$-$G$-biset is always assumed to be finite.

If we are given an $H$-$G$-biset $U$, then there is a functor
\[ \Uc-\colon\Gs\rightarrow\Hs \]
which preserves finite direct sums and fiber products (\cite{Bouc_Biset}). In fact, for any $X\in\Ob(\Gs)$, the object $\Uc X\in\Ob(\Hs)$ is given by
\[ \Uc X=\{ (u,x)\in U\times X\mid {}_uG\le G_x \}/G, \]
where the equivalence relation $(/G)$ is defined by
\begin{itemize}
\item[-] $(u,x)$ and $(u\ppr,x\ppr)$ are equivalent if there exists some $g\in G$ satisfying $u\ppr=ug$ and $x=gx\ppr$.
\end{itemize}
We denote the equivalence class of $(u,x)$ by $[u,x]$. 
Then $\Uc X$ is equipped with an $H$-action
\[ h{[}u,x{]}={[}hu,x{]}\quad({}^{\forall}h\in H, {}^{\forall}{[}u,x{]}\in\Uc X). \]
For any $f\in\Gs(X,Y)$, the morphism $\Uc f\in\Hs(\Uc X, \Uc Y)$ is defined by
\[ \Uc f({[}u,x{]})={[}u,f(x){]}\quad ({}^{\forall}{[}u,x{]}\in\Uc X). \]

This functor $\Uc-$ enables us to transform a Mackey functor $M$ on $H$ into a Mackey functor $M\cU=M(\Uc-)$ on $G$ (\cite{Bouc},\cite{Bouc_Biset}).
In fact, this construction gives a functor (\cite{Bouc_Biset})
\[ -\cU\colon \MackH\rightarrow\MackG\ ;\ M\mapsto M\cU, \]
which, in this article, we would like to call the {\it biset transformation} along $U$. Here, $\MackG$ and $\MackH$ denote the category of Mackey functors on $G$ and $H$, respectively.

In this article, we show that the functor $\Uc-\colon\Gs\rightarrow\Hs$ also preserves exponential diagrams if $U$ is right-free, namely if any element $u\in U$ satisfies
\[ ug=u\ \ \Rightarrow\ \  g=e \]
for $g\in G$.
As a corollary we obtain a biset transformation for Tambara functors
\[ -\cU\colon \TamH\rightarrow\TamG\ ;\ T\mapsto T\cU \]
for any right-free biset $U$, where $\TamG$ and $\TamH$ are the category of Tambara functors on $G$ and $H$.

This biset transformation is compatible with some algebraic operations on Tambara functors, such as ideal quotients or fractions.
If we are given an ideal $\I$ of a Tambara functor $T$ on $H$ (\cite{N_IdealTam}), then $\I$ is transformed into an ideal $\I\cU$ of $T\cU$, and there is a natural isomorphism of Tambara functors
\[ (T/\I)\cU\overset{\cong}{\longrightarrow}(T\cU)/(\I\cU). \]
Or, if we are given a multiplicative semi-Mackey subfunctor $\Sl$ of a Tambara functor $T$ on $H$ (\cite{N_FracTam}), then $\Sl$ is transformed into a multiplicative semi-Mackey subfunctor $\Sl\cU$ of $T\cU$, and there is a natural isomorphism of Tambara functors
\[ (\Sl^{-1}T)\cU\overset{\cong}{\longrightarrow}(\Sl\cU)^{-1}(T\cU). \]

In the latter part, we construct a left adjoint functor
\[ \mathfrak{L}_U\colon\TamG\rightarrow\TamH \]
of the biset transformation $-\cU\colon \TamH\rightarrow\TamG$.
As an immediate corollary of the adjoint property, $\mathfrak{L}_U$ becomes compatible with the {\it Tambarization} functor $\Omega [-]$ (Corollary \ref{CorAdj2}).
\[
\xy
(-14,7)*+{\TamH}="0";
(14,7)*+{\TamG}="2";
(-14,-7)*+{\SMackH}="4";
(14,-7)*+{\SMackG}="6";
{\ar_{\mathfrak{L}_U} "2";"0"};
{\ar^{\Omega_H{[}{-]}} "4";"0"};
{\ar_{\Omega_G{[}{-]}} "6";"2"};
{\ar^{\mathcal{L}_U} "6";"4"};
{\ar@{}|\circlearrowright "0";"6"};
\endxy
\]

\bigskip

For any finite group $G$, we denote the category of (resp. semi-)Mackey functors on $G$ by $\MackG$ (resp. $\SMackG$).
If $G$ acts on a set $X$ from the left (resp. right), we denote the stabilizer of $x\in X$ by $G_x$ (resp. ${}_xG$). The category of finite $G$-sets is denoted by $\Gs$.

We denote by $\Sett$ the category of sets. For any category $\mathscr{C}$, we denote the category of covariant functors from $\C$ to $\Sett$ by $\Fun(\C,\Sett)$. For functors $E,F\colon \C\rightarrow \Sett$, we denote the set of natural transformations from $E$ to $F$ by $\Nat_{(\C,\Sett)}(E,F)=\Fun(\C,\Sett)(E,F)$.
If $\mathscr{C}$ admits finite products, let $\Add(\mathscr{C},\Sett)$ denote the category of covariant functors $F\colon\mathscr{C}\rightarrow\Sett$ preserving finite products.

\bigskip

\begin{dfn}
For each $f\in\Gs(X,Y)$ and $p\in\Gs(A,X)$, the {\it canonical exponential diagram} generated by $f$ and $p$ is the commutative diagram
\[
\xy
(-14,6)*+{X}="0";
(-14,-6)*+{Y}="2";
(-1,6)*+{A}="4";
(9,6)*+{}="5";
(18,4.7)*+{X\underset{Y}{\times}\Pi_{f}(A)}="6";
(16,2)*+{}="7";
(16,-6)*+{\Pi_{f}(A)}="8";
(0,0)*+{\mathit{exp}}="10";
{\ar_{f} "0";"2"};
{\ar_{p} "4";"0"};
{\ar_>>>>>{e} "5";"4"};
{\ar^>>>>{f\ppr} "7";"8"};
{\ar^{\pi} "8";"2"};
\endxy
\]
where
\[\Pi_{f}(A)=\Set{(y,\sigma)|%
\begin{array}{l}%
y\in Y, \\
\sigma\colon f^{-1}(y)\rightarrow A \ \, \text{is a map of sets},\\
p\circ \sigma\ \, \text{is equal to the inclusion}\ f^{-1}(y)\hookrightarrow X%
\end{array}},
\]
\[\pi(y,\sigma)=y,\ \ \ \ \ e(x,(y,\sigma))=\sigma(x), \]
and $f\ppr$ is the pull-back of $f$ by $\pi$.
On $\Pi_f(A)$, $G$ acts by
\[ g(y,\sigma)=(gy,{}^g\sigma), \]
where ${}^g\sigma$ is the map defined by ${}^g\sigma(x\ppr)=g\sigma(g^{-1}x\ppr)$
for any $x\ppr\in f^{-1}(gy)$.
A diagram in $\Gs$ isomorphic to one of the canonical exponential diagrams is called an {\it exponential diagram}.
\end{dfn}

\begin{rem}
We denote the comma category of $\Gs$ over $X\in\Ob(\Gs)$ by $\Gs/X$. For each morphism $f\in\Gs(X,Y)$, the functor
\[ \Pi_f\colon \Gs/X\rightarrow \Gs/Y\ ; \ (A\overset{p}{\rightarrow}X)\mapsto (\Pi_f(A)\overset{\pi}{\rightarrow}Y) \]
gives a right adjoint of the pullback functor
\[ -\times_YX\colon \Gs/Y\rightarrow \Gs/X. \]

\end{rem}

\begin{dfn}\label{DefTamFtr}(\cite{Tam})
A {\it semi-Tambara functor} $T$ {\it on} $G$ is a triplet $T=(T^{\ast},T_+,T_{\bullet})$ of two covariant functors
\[ T_+\colon\Gs\rightarrow\Sett,\ \ T_{\bullet}\colon\Gs\rightarrow\Sett \]
and one additive contravariant functor
\[ T^{\ast}\colon\Gs\rightarrow\Sett \]
which satisfies the following.
\begin{enumerate}
\item $T^{\alpha}=(T^{\ast},T_+)$ and $T^{\mu}=(T^{\ast},T_{\bullet})$ are objects in $\SMackG$. $T^{\alpha}$ is called the {\it additive part} of $T$, and $T^{\mu}$ is called the {\it multiplicative part} of $T$.
\item (Distributive law)
If we are given an exponential diagram
\[
\xy
(-12,6)*+{X}="0";
(-12,-6)*+{Y}="2";
(0,6)*+{A}="4";
(12,6)*+{Z}="6";
(12,-6)*+{B}="8";
(0,0)*+{exp}="10";
{\ar_{f} "0";"2"};
{\ar_{p} "4";"0"};
{\ar_{\lambda} "6";"4"};
{\ar^{\rho} "6";"8"};
{\ar^{q} "8";"2"};
\endxy
\]
in $\Gs$, then
\[
\xy
(-18,7)*+{T(X)}="0";
(-18,-7)*+{T(Y)}="2";
(0,7)*+{T(A)}="4";
(18,7)*+{T(Z)}="6";
(18,-7)*+{T(B)}="8";
{\ar_{T_{\bullet}(f)} "0";"2"};
{\ar_{T_+(p)} "4";"0"};
{\ar^{T^{\ast}(\lambda)} "4";"6"};
{\ar^{T_{\bullet}(\rho)} "6";"8"};
{\ar^{T_+(q)} "8";"2"};
{\ar@{}|\circlearrowright "0";"8"};
\endxy
\]
is commutative.
\end{enumerate}

If $T=(T^{\ast},T_+,T_{\bullet})$ is a semi-Tambara functor, then $T(X)$ becomes a semi-ring for each $X\in\Ob(\Gs)$, whose additive (resp. multiplicative) monoid structure is induced from that on $T^{\alpha}(X)$ (resp. $T^{\mu}(X)$). 
For each $f\in\Gs(X,Y)$, those maps
$T^{\ast}(f),T_+(f),T_{\bullet}(f)$ are often abbreviated to $f^{\ast},f_+,f_{\bullet}$.

A {\it morphism} of semi-Tambara functors $\varphi\colon T\rightarrow S$ is a family of semi-ring homomorphisms
\[  \varphi=\{\varphi_X\colon T(X)\rightarrow S(X) \}_{X\in\Ob(\Gs)}, \]
natural with respect to all of the contravariant and the covariant parts. We denote the category of semi-Tambara functors by $\STamG$.

If $T(X)$ is a ring for each $X\in\Ob(\Gs)$, then a semi-Tambara functor $T$ is called a {\it Tambara functor}. The full subcategory of Tambara functors in $\STamG$ is denoted by $\TamG$.
\end{dfn}

\begin{rem}\label{RemTamThm}
In \cite{Tam}, it was shown that the inclusion functor $\TamG\hookrightarrow\STamG$ has a left adjoint $\gamma_G\colon\STamG\rightarrow\TamG$.
\end{rem}

\begin{rem}\label{RemTamFtr}
Taking the multiplicative parts, we obtain functors
\[ (-)^{\mu}\colon\STamG\rightarrow\SMackG,\quad (-)^{\mu}\colon\TamG\rightarrow\SMackG. \]
In \cite{N_TamMack}, it was shown that $(-)^{\mu}\colon\STamG\rightarrow\SMackG$ has a left adjoint
\[ \mathcal{S}\colon\SMackG\rightarrow\STamG. \]
Composing with $\gamma_G$, we obtain a functor called {\it Tambarization}
\[ \Omega_G{[}-{]}=\gamma_G\circ\mathcal{S}\colon\SMackG\rightarrow\TamG, \]
which is left adjoint to $(-)^{\mu}\colon\TamG\rightarrow\SMackG$.
\end{rem}




\section{Biset transformation}

In this section, we consider transformation of a Tambara functor along a biset, and show how the functors in the previous section are related.

First, we remark the following.

\begin{rem}
Assume we are given an exponential diagram
\begin{equation}\label{ExpRev}
\xy
(-14,6)*+{X}="0";
(-14,-6)*+{Y}="2";
(1,6)*+{A}="4";
(16,6)*+{Z}="6";
(16,-6)*+{\Pi_f(A)}="8";
(0,0)*+{\mathit{exp}}="10";
{\ar_{f} "0";"2"};
{\ar_{p} "4";"0"};
{\ar_>>>>>{\lambda} "6";"4"};
{\ar^{\rho} "6";"8"};
{\ar^{\pi} "8";"2"};
\endxy
\end{equation}
in $\Gs$. For any $H$-$G$-biset $U$, since $\Uc-$ preserves pullbacks, we obtain a pullback diagram (we will denote pullback diagrams with a square $\square$)
\[
\xy
(-18,7)*+{\Uc X}="0";
(-18,-7)*+{\Uc Y}="2";
(18,7)*+{\Uc Z}="4";
(18,-7)*+{\Uc\Pi_f(A)}="6";
(0,0)*+{\square}="10";
{\ar_{\Uc f} "0";"2"};
{\ar_{(\Uc p)\circ (\Uc \lambda)} "4";"0"};
{\ar^{\Uc\rho} "4";"6"};
{\ar^{\Uc\pi} "6";"2"};
\endxy
\]
in $\Hs$. If we take an exponential diagram associated to
\[ \Uc X\overset{\Uc f}{\longleftarrow}\Uc Y\overset{\Uc p}{\longleftarrow}\Uc A \]
as
\[
\xy
(-18,7)*+{\Uc X}="0";
(-18,-7)*+{\Uc Y}="2";
(3,7)*+{\Uc A}="4";
(22,7)*+{\underset{\ }{Z\ppr}}="6";
(22,-7)*+{\Pi_{\Uc f}(\Uc A)}="8";
(33,-8)*+{,}="9";
(0,0)*+{\mathit{exp}}="10";
{\ar_{\Uc f} "0";"2"};
{\ar_{\Uc p} "4";"0"};
{\ar "6";"4"};
{\ar "6";"8"};
{\ar "8";"2"};
\endxy
\]
then by the adjointness between
\[ -\underset{\Uc Y}{\times}(\Uc X)\ \colon\ \Hs/_{\Uc Y}\rightarrow\Hs/_{\Uc X} \]
and
\[ \Pi_{\Uc f}\ \colon\ \Hs/_{\Uc X}\rightarrow\Hs/_{\Uc Y}, \]
we obtain a natural bijection
\begin{eqnarray*}
\Hs/_{\Uc Y}&&\!\!\!\!\!\!\!\!\!\!\!\!\!\!\!\!(\,\Uc\Pi_f(A),\ \Pi_{\Uc f}(\Uc A)\,)\\
&\cong&\Hs/_{\Uc X}\,(\,(\Uc\Pi_f(A))\underset{\Uc Y}{\times}(\Uc X),\ \Uc A\,)\\
&\cong&\Hs/_{\Uc X}\,(\,\Uc Z,\ \Uc A\,).
\end{eqnarray*}
Thus there should exist a morphism
\[ \Uc\Pi_f(A)\rightarrow\Pi_{\Uc f}(\Uc A) \]
corresponding to $\Uc\lambda\colon \Uc Z\rightarrow \Uc A$.
\end{rem}

With this view, we construct an $H$-map
\[ \Phi\colon\Uc\Pi_f(A)\rightarrow\Pi_{\Uc f}(\Uc A) \]
explicitly for any $H$-$G$-biset $U$, for the later use.

\medskip

By definition, we have
\[ \Uc\Pi_f(A)=\Set{ [u,(y,\sigma)] | \begin{array}{c}u\in U\\ (y,\sigma)\in\Pi_f(A)\end{array}\ {}_{u}\!\, G\le G_{(y,\sigma)} }  
, \]
\[ \Pi_{\Uc f}(\Uc A)=\Set{ ([u,y],\tau) | \begin{array}{c} [u,y]\in\Uc Y\\ 
\tau\,\colon\, (\Uc f)^{-1}([u,y])\rightarrow\Uc A\,\,\,\text{is a map},\\
\text{satisfying}\ \ (\Uc p)\circ \tau =\mathit{incl.} \end{array}  }. \]

\begin{rem}\label{RemPhi}
Let $U$ be any $H$-$G$-biset. For any $[u,y]\in\Uc Y$, the following holds.
\begin{enumerate}
\item An element $[u_0,x_0]\in\Uc X$ belongs to $(\Uc f)^{-1}([u,y])$ if and only if there exists $g_0\in G$ satisfying
\begin{equation}\label{EqFib}
u=u_0g_0\quad\text{and}\quad  g_0y=f(x_0).
\end{equation}
In particular, $g_0^{-1}\cdot x_0\in f^{-1}(y)$.
\item Let $[u_0,x_0]$ be an element in $(\Uc f)^{-1}([u,y])$. If $g_0$ satisfies $(\ref{EqFib})$ and $g_0\ppr$ similarly satisfies
\[ u=u_0g_0\ppr\quad\text{and}\quad  g_0\ppr y=f(x_0), \]
then we have $g_0^{-1}\cdot x_0=g_0^{\prime-1}\cdot x_0$.
\end{enumerate}
\end{rem}
\begin{proof}
{\rm (1)} We have
\begin{eqnarray*}
[u_0,x_0]\in(\Uc f)^{-1}([u,y])&\Leftrightarrow&[u_0,f(x_0)]=[u,y]\\
&\Leftrightarrow& {}^{\exists}g_0\in G\ \text{such that}\ u=u_0g_0, \ g_0y=f(x_0).\end{eqnarray*}
{\rm (2)} Since $u_0g_0=u_0g_0\ppr$ implies $g_0\ppr g_0^{-1}\in {}_{u_0}\!\, G\le G_{x_0}$, it follows $g_0\ppr g_0^{-1}\cdot x_0=x_0$.
\end{proof}

\bigskip

\begin{lem}\label{LemPhi}
For any $[u,(y,\sigma)]\in\Uc\Pi_f(A)$, define $\Phi([u,(y,\sigma)])$ by
\[ \Phi([u,(y,\sigma)])=([u,y],\tau_{\sigma,u}), \]
where $\tau_{\sigma,u}\colon (\Uc f)^{-1}([u,y])\rightarrow \Uc A$ is a map defined by
\[ \tau_{\sigma,u}([u_0,x_0])=[u,\sigma(g_0^{-1}x_0)]\quad (\, {}^{\forall}[u_0,x_0]\in(\Uc f)^{-1}([u,y])\, ), \]
where $g_0\in G$ is an element satisfying $(\ref{EqFib})$.
$($It can be easily confirmed that $[u,\sigma(g_0^{-1}x_0)]$ belongs to $\Uc A$, by using $(\ref{EqFib})$$)$

Then $\Phi\colon \Uc\Pi_f(A)\rightarrow\Pi_{\Uc f}(\Uc A)$ becomes a well-defined $H$-map.
\end{lem}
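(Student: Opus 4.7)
The plan is to check four things in turn: (i) for fixed $(u,(y,\sigma))$ and fixed $[u_0,x_0]$, the formula for $\tau_{\sigma,u}([u_0,x_0])$ is independent of the auxiliary choice of $g_0$; (ii) it is independent of the chosen representative $(u_0,x_0)$ of its equivalence class, and the output lies in $\Uc A$; (iii) $(\Uc p)\circ\tau_{\sigma,u}$ equals the inclusion into $\Uc X$; (iv) $\Phi$ descends to the equivalence classes $[u,(y,\sigma)]$ and is $H$-equivariant.

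Part (i) is immediate from Remark \ref{RemPhi}(2). For (ii), given another representative with $u_0'=u_0 g$ and $x_0=g x_0'$, I would set $g_0':=g^{-1}g_0$, verify it realises (\ref{EqFib}) for $(u_0',x_0')$, and observe $g_0'^{-1}x_0'=g_0^{-1}x_0$. Containment in $\Uc A$ is the most technical sub-step: for $h\in{}_uG$, conjugation through $u=u_0g_0$ gives $g_0 h g_0^{-1}\in{}_{u_0}G\le G_{x_0}$, hence $h^{-1}g_0^{-1}x_0=g_0^{-1}x_0$; then the identity ${}^h\sigma=\sigma$ — which holds because $[u,(y,\sigma)]\in\Uc\Pi_f(A)$ forces ${}_uG\le G_{(y,\sigma)}$ — combined with ${}^h\sigma(g_0^{-1}x_0)=h\sigma(h^{-1}g_0^{-1}x_0)$ produces $h\sigma(g_0^{-1}x_0)=\sigma(g_0^{-1}x_0)$, i.e.\ $h\in G_{\sigma(g_0^{-1}x_0)}$.

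Part (iii) is a one-line calculation: $(\Uc p)\tau_{\sigma,u}([u_0,x_0])=[u,p\sigma(g_0^{-1}x_0)]=[u,g_0^{-1}x_0]=[u_0g_0,g_0^{-1}x_0]=[u_0,x_0]$. For (iv), suppose $[u,(y,\sigma)]=[u',(y',\sigma')]$ with $u'=ug$, $y=gy'$, $\sigma={}^g\sigma'$. Then $[u,y]=[u',y']$, and with the substitution $g_1:=g_0 g$ one reduces $\tau_{\sigma',u'}([u_0,x_0])=\tau_{\sigma,u}([u_0,x_0])$ to the tensor-product identity $[ug,a]=[u,ga]$ applied to the defining formula ${}^g\sigma'(g_0^{-1}x_0)=g\sigma'(g^{-1}g_0^{-1}x_0)$. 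For $H$-equivariance, the key observation is that the element of $G$ realising (\ref{EqFib}) for $(h^{-1}u_0,x_0)$ relative to $[u,y]$ is \emph{the same} $g_0$ as the one realising it for $(u_0,x_0)$ relative to $[hu,y]$; comparing $\Phi(h[u,(y,\sigma)])=([hu,y],\tau_{\sigma,hu})$ with $h\Phi([u,(y,\sigma)])=([hu,y],{}^h\tau_{\sigma,u})$ pointwise then gives $[hu,\sigma(g_0^{-1}x_0)]$ from both sides.

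The main obstacle is purely organisational: one must simultaneously coordinate the left $H$-action, the right $G$-action on $U$, the conjugation-style $G$-action on the maps $\sigma$, and the tensor-product equivalence relations on $\Uc X$, $\Uc A$ and $\Uc\Pi_f(A)$. Once the correct transport of $g_0$ under each symmetry is written down, every individual verification collapses to a one- or two-line manipulation.
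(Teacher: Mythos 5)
Your proposal is correct and follows essentially the same route as the paper: independence of $g_0$ via Remark \ref{RemPhi}, well-definedness of $\tau_{\sigma,u}$ on representatives of $[u_0,x_0]$, descent of $\Phi$ to classes $[u,(y,\sigma)]$, and $H$-equivariance via the observation that the same $g_0$ realises $(\ref{EqFib})$ for $[hu,y]$ and $[u,y]$. You are in fact slightly more complete than the paper, which leaves the containment $[u,\sigma(g_0^{-1}x_0)]\in\Uc A$ as a parenthetical remark and does not explicitly verify that $(\Uc p)\circ\tau_{\sigma,u}$ is the inclusion; both of your verifications of these points are correct.
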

\begin{proof}
By Remark \ref{RemPhi}, this $\sigma(g_0^{-1}x_0)$ is independent of the choice of $g_0$.

It suffices to show the following.
\begin{enumerate}
\item $\tau_{\sigma,u}$ is well-defined for each $[u,(y,\sigma)]\in\Uc\Pi_f(A)$.
\item $\Phi$ is well-defined.
\item $\Phi$ is an $H$-map.
\end{enumerate}

\smallskip

{\rm (1)} Suppose $[u_0\ppr,x_0\ppr]=[u_0,x_0]$ and take $g_0,g_0\ppr\in G$ satisfying
\begin{eqnarray*}
u=u_0g_0,&g_0 y=f(x_0),\\
u=u_0\ppr g_0\ppr,&g_0\ppr y=f(x_0\ppr).
\end{eqnarray*}
Since $[u_0\ppr,x_0\ppr]=[u_0,x_0]$, there exists some $g\in G$ satisfying
\[ u_0\ppr=u_0g,\quad x_0\ppr=g^{-1}x_0. \]
Then we obtain
\[ {[}u,\sigma(g_0^{\prime-1}x_0\ppr){]}={[}u,\sigma(g_0^{\prime-1}g^{-1}x_0){]}. \]
Since $u_0g_0=u=u_0\ppr g_0\ppr=u_0gg_0\ppr$, we have
\[ g_0g_0^{\prime-1}g^{-1}\in {}_{u_0}G\le G_{x_0}, \]
which means $g_0^{\prime-1} g^{-1}x_0=g_0^{-1}x_0$, and thus
\[ {[}u,\sigma(g_0^{\prime-1}x_0\ppr){]}={[}u,\sigma(g_0^{-1}x_0){]}. \]

{\rm (2)} Suppose $[u,(y,\sigma)]=[u\ppr,(y\ppr,\sigma\ppr)]$. 
There exists $g\in G$ satisfying
\[ u\ppr=ug \quad\text{and}\quad (y\ppr,\sigma\ppr)=g^{-1}\cdot(y,\sigma), \]
namely $y\ppr=g^{-1}y,\ \sigma\ppr={}^{g^{-1}}\!\!\!\sigma$.
In particular we have $[u,y]=[u\ppr,y\ppr]$, and thus
\[ (\Uc f)^{-1}({[}u,y{]})=(\Uc f)^{-1}({[}u\ppr,y\ppr {]}). \]
For any $[u_0,x_0]\in(\Uc f)^{-1}([u,y])$, take $g_0$ and $g_0\ppr$ satisfying
\begin{eqnarray*}
u=u_0g_0,&g_0y=f(x_0),\\
u\ppr=u_0g_0\ppr,&g_0\ppr y\ppr=f(x_0).
\end{eqnarray*}
Since $u_0g_0g=ug=u\ppr=u_0g_0\ppr$ implies $gg_0^{\prime-1}x_0=g_0^{-1}x_0$ as in the above argument, we have
\begin{eqnarray*}
\tau_{\sigma\ppr,u\ppr}({[}u_0,x_0{]})&=&{[}u\ppr,\sigma\ppr(g_0^{\prime-1}x_0){]}\ =\ {[}ug,g^{-1}\sigma(gg_0^{\prime-1}x_0){]}\\
&=&{[}ug,g^{-1}\sigma(g_0^{-1}x_0){]}\ =\ {[}u,\sigma(g_0^{-1}x_0){]}\ =\ \tau_{\sigma,u}({[}u_0,x_0{]}).
\end{eqnarray*}
Thus we obtain $([u,y],\tau_{\sigma,u})=([u\ppr,y\ppr],\tau_{\sigma\ppr,u\ppr})$, and $\Phi$ is well-defined.

{\rm (3)} Let $[u,(y,\sigma)]$ be any element. For any $h\in H$, we have
\begin{eqnarray*}
\Phi(h {[}u,(y,\sigma){]})&=&\Phi({[}hu,(y,\sigma){]})\\
&=&({[}hu,y{]},\tau_{\sigma,hu})\ =\ (h{[}u,y{]},\tau_{\sigma,hu}).
\end{eqnarray*}
Thus it suffices to show $\tau_{\sigma,hu}={}^h\tau_{\sigma,u}$.

Let $[u_{\dag},x_{\dag}]\in(\Uc f)^{-1}([hu,y])$ be any element. Take $g_{\dag}\in G$ satisfying
\begin{equation}
\label{EqDag}
hu=u_{\dag}g_{\dag},\quad g_{\dag}y=f(x_{\dag}).
\end{equation}
By the definition of $\tau_{\sigma,hu}$, we have
\begin{equation}
\label{EqTauh}
\tau_{\sigma,hu}({[}u_{\dag},x_{\dag}{]})={[}hu,\sigma(g_{\dag}^{-1}x_{\dag}){]}=h{[}u,\sigma(g_{\dag}^{-1}x_{\dag}){]}
\end{equation}
for any $[u_{\dag},x_{\dag}]\in(\Uc f)^{-1}([hu,y])$.

\medskip

We have the following.
\begin{rem}\label{RemDag}
$\ \ $
\begin{enumerate}
\item When $[u_{\dag},x_{\dag}]$ runs through the elements in $(\Uc f)^{-1}([hu,y])$, then
\[ h^{-1}[u_{\dag},x_{\dag}]=[h^{-1}u_{\dag},x_{\dag}] \]
runs through the elements in $(\Uc f)^{-1}([u,y])$.
\item If $g_{\dag}\in G$ satisfies $(\ref{EqDag})$, then we have
\[ u=h^{-1}u_{\dag}g_{\dag},\quad g_{\dag}y=f(x_{\dag}). \]
Thus by the definition of $\tau_{\sigma,u}$, we have
\[ \tau_{\sigma,u}({[}h^{-1}u_{\dag},x_{\dag}{]})={[}u,\sigma(g_{\dag}^{-1}x_{\dag}){]}.  \]
\end{enumerate}
\end{rem}

\medskip

By $(\ref{EqTauh})$ and Remark \ref{RemDag}, we obtain
\begin{eqnarray*}
{}^h\tau_{\sigma,u}{[}u_{\dag},x_{\dag}{]}&=&h\tau_{\sigma,u}({[}h^{-1}u_{\dag},x_{\dag}{]})\\
&=&h{[}u,\sigma(g_{\dag}^{-1}x_{\dag}){]}\ =\ \tau_{\sigma,hu}({[}u_{\dag},x_{\dag}){]}
\end{eqnarray*}
for any $[u_{\dag},x_{\dag}]\in(\Uc f)^{-1}([hu,y])$. Namely, $\tau_{\sigma,hu}={}^h\tau_{\sigma,u}$.
\end{proof}

\bigskip

So far we obtained an $H$-map $\Phi\colon \Uc\Pi_f(A)\rightarrow\Pi_{\Uc f}(\Uc A)$. We show that this map is bijective, when $U$ is right-free.
\begin{prop}\label{PropBiset}
Let $G$, $H$ be finite groups, and let $U$ be a right-free $H$-$G$-biset. Then
\[ \Uc-\colon\Gs\rightarrow\Hs \]
preserves exponential diagrams.
\end{prop}
\begin{rem}\label{RFRem}
When $U$ is not right-free, this does not necessarily hold. For example, let $U$ be a singleton $U=\{\ast\}$ with a trivial $H$-$G$-action, and put
\begin{eqnarray*}
&X=G/e,\ Y=G/G,\ A=G/e\amalg G/e,&\\
&f\colon X\rightarrow Y\ ;\ \text{the unique constant map},&\\
&p\colon A\rightarrow X\ ;\ \text{the folding map}.&
\end{eqnarray*}
If $G$ is non-trivial, then we have $\Pi_{\Uc f}(\Uc A)\cong Y$, while $\Uc\Pi_f(A)\cong Y\amalg Y$.
\end{rem}

\begin{proof}[Proof of Proposition \ref{PropBiset}.]
Let $(\ref{ExpRev})$ be any exponential diagram as before.
By Lemma \ref{LemPhi}, we have a well-defined $H$-map
\[ \Phi\colon \Uc\Pi_f(A)\rightarrow\Pi_{\Uc f}(\Uc A). \]
It suffices to construct the inverse $\Psi$ of $\Phi$.

Remark that since $U$ is right-free, we have
\[ \Uc X=(U\times G)/G \]
for any $X\in\Ob(\Gs)$. Thus for any $[u,y]\in\Uc Y$ and any $x^{\dag}\in f^{-1}(y)$, we have
\[ [u,x^{\dag}]\in (\Uc f)^{-1}([u,y]) \]
by Remark \ref{RemPhi}.

For any $([u,y],\tau)\in\Pi_{\Uc f}(\Uc A)$, define $\Psi([u,y],\tau)$ by
\[ \Psi({[}u,y{]},\tau)={[}u,(y,\sigma_{\tau,u}){]}, \]
where $\sigma_{\tau,u}\colon f^{-1}(y)\rightarrow A$ is a map satisfying
\begin{equation}\label{EqPsi}
{[}u,\sigma_{\tau,u}(x^{\dag}){]}=\tau({[}u,x^{\dag}{]})\quad({}^{\forall}x^{\dag}\in f^{-1}(y)).
\end{equation}

\smallskip

Here, we have the following.
\begin{rem}\label{RemUA}
If $[u,a],[u\ppr,a\ppr]\in \Uc A$ satisfy
\[ {[}u,a{]}={[}u\ppr, a\ppr {]}\quad\text{and}\quad u=u\ppr, \]
then we have $a=a\ppr$.
\end{rem}

\medskip

Thus $\sigma_{\tau,u}(x^{\dag})$ is well-defined by $(\ref{EqPsi})$ for each $x^{\dag}$.
To show Proposition \ref{PropBiset}, it suffices to show the following.
\begin{enumerate}
\item $\Psi\colon\Pi_{\Uc f}(\Uc A)\rightarrow\Uc\Pi_f(A)$ is a well-defined map.
\item $\Psi\circ\Phi=\id$.
\item $\Phi\circ\Psi=\id$.
\end{enumerate}

\medskip

{\rm (1)} Suppose $([u,y],\tau)=([u\ppr,y\ppr],\tau\ppr)$. Then obviously we have $\tau\ppr=\tau$. There exists some $g\in G$ satisfying
\[ u=u\ppr g,\quad gy=y\ppr, \]
In particular we have $f^{-1}(y\ppr)=g\cdot f^{-1}(y)$.
By definition of $\sigma_{\tau,u}$ and $\sigma_{\tau,u\ppr}$, we have
\begin{eqnarray*}
{[}u,\sigma_{\tau,u}(x^{\dag}){]}&=&\tau({[}u,x^{\dag}{]}),\\
{[}u\ppr,\sigma_{\tau,u\ppr}(gx^{\dag}){]}&=&\tau({[}u\ppr,gx^{\dag}{]})
\end{eqnarray*}
for any $x^{\dag}\in f^{-1}(y)$.

Thus it follows
\begin{eqnarray*}
{[}u,\sigma_{\tau,u}(x^{\dag}){]}&=&\tau({[}u,x^{\dag}{]})\ =\ \tau({[}u\ppr g,x^{\dag}{]})\\
&=&\tau({[}u\ppr,gx^{\dag}{]})\ =\ {[}u\ppr,\sigma_{\tau,u\ppr}(gx^{\dag}){]}\\
&=&{[}ug^{-1},\sigma_{\tau,u\ppr}(gx^{\dag}){]}\ =\ {[}u,\, {}^{g^{-1}}\!\!\!\sigma_{\tau,u\ppr}(x^{\dag}){]}.
\end{eqnarray*}
By Remark \ref{RemUA}, this means $\sigma_{\tau,u}={}^{g^{-1}}\!\!\!\sigma_{\tau,u\ppr}$.
Thus it follows
\begin{eqnarray*}
{[}u,(y,\sigma_{\tau,u}){]}&=&{[}u\ppr g,(g^{-1}y\ppr,\, {}^{g^{-1}}\!\!\!\sigma_{\tau,u\ppr}){]}\\
&=&{[}u\ppr g,g^{-1}(y\ppr,\sigma_{\tau,u\ppr}){]}\ =\ {[}u\ppr,(y\ppr,\sigma_{\tau,u\ppr}){]},
\end{eqnarray*}
and thus $\Psi$ is well-defined.

{\rm (2)} Let $[u,(y,\sigma)]\in\Uc\Pi_f(A)$ be any element.
We have
\[ \Psi\circ\Phi({[}u,(y,\sigma){]})=\Psi({[}u,y{]},\tau_{\sigma,u})={[}u,(y,\sigma_{\tau_{\sigma,u},u}){]}, \]
where $\tau_{\sigma,u}$ and $\sigma_{\tau_{\sigma,u},u}$ are defined by
\begin{eqnarray*}
\tau_{\sigma,u}({[}u_0,x_0{]})={[}u,\sigma(g_0^{-1}x_0){]}&({}^{\forall}{[}u_0,x_0{]}\in(\Uc f)^{-1}({[}u,y{]})),\\
{[}u,\sigma_{\tau_{\sigma,u},u}(x^{\dag}){]}=\tau_{\sigma,u}({[}u,x^{\dag}{]})&({}^{\forall}x^{\dag}\in f^{-1}(y)),
\end{eqnarray*}
using $g_0\in G$ satisfying $u=u_0g_0$ and $g_0y=f(x_0)$.
In particular we have
\[ \tau_{\sigma,u}({[}u,x^{\dag}{]})={[}u,\sigma(x^{\dag}){]}\ \ ({}^{\forall}x^{\dag}\in f^{-1}(y)), \]
and thus
\[ {[}u,\sigma_{\tau_{\sigma,u},u}(x^{\dag}){]}=\tau_{\sigma,u}({[}u,x^{\dag}{]})={[}u,\sigma(x^{\dag}){]} \]
for any $x^{\dag}\in f^{-1}(y)$. By Remark \ref{RemUA}, it follows $\sigma_{\tau_{\sigma,u},u}=\sigma$, and thus $\Psi\circ\Phi([u,(y,\sigma)])=[u,(y,\sigma)]$.

{\rm (3)} Let $([u,y],\tau)\in\Pi_{\Uc f}(\Uc A)$ be any element. We have
\[ \Phi\circ\Psi({[}u,y{]},\tau)=\Phi({[}u,(y,\sigma_{\tau,u}){]})=({[}u,y{]},\tau_{\sigma_{\tau,u},u}), \]
where $\sigma_{\tau,u}$ and $\tau_{\sigma_{\tau,u},u}$ are defined by
\begin{eqnarray*}
{[}u,\sigma_{\tau,u}(x^{\dag}){]}=\tau({[}u,x^{\dag}{]})&({}^{\forall}x^{\dag}\in f^{-1}(y)),\\
\tau_{\sigma_{\tau,u},u}({[}u_0,x_0{]})={[}u,\sigma_{\tau,u}(g_0^{-1}x_0){]}&({}^{\forall}{[}u_0,x_0{]}\in(\Uc f)^{-1}({[}u,y{]})),
\end{eqnarray*}
using $g_0\in G$ satisfying $u=u_0g_0$ and $g_0y=f(x_0)$.
It follows
\[ \tau_{\sigma_{\tau,u},u}({[}u_0,x_0{]})=\tau({[}u,g_0^{-1}x_0{]})=\tau({[}u_0,x_0{]}) \]
for any ${[}u_0,x_0{]}\in(\Uc f)^{-1}({[}u,y{]})$, and thus $\Phi\circ\Psi({[}u,y{]},\tau)=({[}u,y{]},\tau)$.
\end{proof}

\medskip

Proposition \ref{PropBiset} allows us to transform Tambara functors along a biset.

\begin{cor}\label{CorBiset}
Let $U$ be a right-free $H$-$G$-biset. For any $T\in\Ob(\TamH)$, if we define $T\cU$ by
\begin{eqnarray*}
T\cU(X)=T(\Uc X)& &({}^{\forall}X\in\Ob(\Gs)),\\
(T\cU)^{\ast}(f)=T^{\ast}(\Uc f)& &\\
(T\cU)_+(f)=T_+(\Uc f)& &({}^{\forall}f\in\Gs(X,Y)),\\
(T\cU)_{\bullet}(f)=T_{\bullet}(\Uc f)& &
\end{eqnarray*}
then $T\cU$ becomes an object in $\TamG$.

If $\varphi\colon T\rightarrow S$ is a morphism in $\TamH$, then
\[ \varphi\cU=\{ \varphi_{\Uc X} \}_{X\in\Ob(\Gs)} \]
forms a morphism $\varphi\cU\colon T\cU\rightarrow S\cU$ in $\TamG$.

This correspondence gives a functor $-\cU\colon\TamH\rightarrow\TamG$.
In the same way, we obtain a functor $-\cU\colon\STamH\rightarrow\STamG$.

\end{cor}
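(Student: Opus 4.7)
The plan is to verify each clause of Definition \ref{DefTamFtr} for $T\cU$ by reducing it to the corresponding clause for $T$ applied to the image of the relevant diagram under $\Uc-$. All the diagrammatic ingredients appearing in Definition \ref{DefTamFtr}---functoriality itself, the finite coproducts needed for additivity of the contravariant part, the pullback squares that enter the semi-Mackey condition, and the exponential diagrams that enter the distributive law---are preserved by the functor $\Uc-\colon \Gs\to\Hs$. Coproducts and pullbacks are preserved by the result of Bouc cited in the introduction, and exponential diagrams are preserved by Proposition \ref{PropBiset} just established.

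First I would check that the three assignments $(T\cU)^{\ast}$, $(T\cU)_+$, $(T\cU)_\bullet$ are functors on $\Gs$ of the correct variance, which is immediate from composing $\Uc-$ with the corresponding functors on $\Hs$. Additivity of $(T\cU)^{\ast}$ follows because $\Uc-$ sends finite disjoint unions to finite disjoint unions and $T^{\ast}$ is additive. For the semi-Mackey conditions required for the pairs $((T\cU)^{\ast},(T\cU)_+)$ and $((T\cU)^{\ast},(T\cU)_\bullet)$, a pullback square in $\Gs$ is sent by $\Uc-$ to a pullback square in $\Hs$, and the Mackey commutation for $T$ on the latter square is exactly what is needed. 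The distributive law is proved the same way: an exponential diagram in $\Gs$ is carried by $\Uc-$ to an exponential diagram in $\Hs$ by Proposition \ref{PropBiset}, and the distributive law for $T$ applies there. Since each $T(\Uc X)$ is already a ring (resp. semi-ring) when $T\in\TamH$ (resp. $\STamH$), the ring structure on $(T\cU)(X)$ requires no further argument.

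For the morphism assignment, a natural family $\varphi=\{\varphi_Y\}_{Y\in\Ob(\Hs)}$ of semi-ring homomorphisms restricts along $X\mapsto\Uc X$ to the family $\varphi\cU=\{\varphi_{\Uc X}\}_{X\in\Ob(\Gs)}$; naturality with respect to any $f\in\Gs(X,Y)$ in each of the three variances follows by evaluating naturality of $\varphi$ on the morphism $\Uc f$. The functoriality identities $\id\cU=\id$ and $(\psi\circ\varphi)\cU=(\psi\cU)\circ(\varphi\cU)$ are immediate from the definitions, and the same argument applied without the ring axiom gives the semi-Tambara version. In summary, the corollary is essentially a formal consequence of Proposition \ref{PropBiset} together with the preservation of coproducts and pullbacks by $\Uc-$; there is no substantive obstacle, since the genuine content---preservation of exponential diagrams---is exactly the work done in the preceding proposition.
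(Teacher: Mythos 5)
Your proposal is correct and is exactly the argument the paper intends: Corollary \ref{CorBiset} is stated without an explicit proof precisely because it follows formally from Proposition \ref{PropBiset} together with the already-known preservation of finite coproducts and pullbacks by $\Uc-$, which is the reduction you carry out. Nothing is missing.
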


\begin{rem}
Since $\Uc -\colon\Gs\rightarrow\Hs$ preserves finite direct sums and pullbacks, this induces a functor
\[ -\cU\colon\SMackH\rightarrow\SMackG, \]
defined in the same way. (For the case of Mackey functors, see \cite{Bouc}.)

Clearly by the construction, these functors are compatible. Namely, we have the following commutative diagrams of functors.
\begin{equation}\label{DiagcU}
\xy
(-12,12)*+{\TamH}="0";
(12,12)*+{\TamG}="2";
(-12,0)*+{\STamH}="4";
(12,0)*+{\STamG}="6";
(-12,-12)*+{\SMackH}="8";
(12,-12)*+{\SMackG}="10";
{\ar^{-\cU} "0";"2"};
{\ar^{-\cU} "4";"6"};
{\ar_{-\cU} "8";"10"};
{\ar@{^(->} "0";"4"};
{\ar@{^(->} "2";"6"};
{\ar_{(-)^{\mu}} "4";"8"};
{\ar^{(-)^{\mu}} "6";"10"};
{\ar@{}|\circlearrowright "0";"6"};
{\ar@{}|\circlearrowright "4";"10"};
\endxy
\ \quad\ 
\xy
(-12,7)*+{\TamH}="0";
(12,7)*+{\TamG}="2";
(-12,-7)*+{\MackH}="4";
(12,-7)*+{\MackG}="6";
{\ar^{-\cU} "0";"2"};
{\ar_{(-)^{\alpha}} "0";"4"};
{\ar^{(-)^{\alpha}} "2";"6"};
{\ar_{-\cU} "4";"6"};
{\ar@{}|\circlearrowright "0";"6"};
\endxy
\end{equation}
\end{rem}

\begin{cor}\label{CorCompatIdeal}
In \cite{N_IdealTam}, an ideal $\I$ of a Tambara functor $T$ on $H$ is defined to be a family of ideals
\[ \{\,\I(X)\subseteq T(X)\}_{X\in\Ob(\Hs)\, }, \]
which satisfies the following for any $f\in\Hs(X,Y)$. 
\begin{enumerate}
\item[{\rm (i)}] $f^{\ast}(\I(Y))\subseteq\I(X)$,
\item[{\rm (ii)}] $f_+(\I(X))\subseteq\I(Y)$,
\item[{\rm (iii)}] $f_{\bullet}(\I(X))\subseteq f_{\bullet}(0)+\I(Y)$.
\end{enumerate}

If $\I\subseteq T$ is an ideal, then the objectwise ideal quotient
\[ T/\I=\{ T(X)/\I(X) \}_{X\in\Ob(\Hs)} \]
carries a natural Tambara functor structure on $H$ induced from that on $T$.

Concerning Corollary \ref{CorBiset}, 
suppose we are given a right-free $H$-$G$-biset $U$. If we define $\I\cU$ by
\[ \I\cU(X)=\I(\Uc X) \]
for each $X\in\Ob(\Gs)$, then $\I\cU\subseteq T\cU$ becomes again an ideal, and we obtain a natural isomorphism of Tambara functors on $G$
\[ (T/\I)\cU\cong (T\cU)/(\I\cU). \]
\end{cor}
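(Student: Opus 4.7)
The plan is to reduce both claims to the defining property that the functor $-\cU$ acts on a Tambara functor by precomposing every structure map with $\Uc-$; once this is unfolded, everything decomposes at each single evaluation $X\in\Ob(\Gs)$.

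To see that $\I\cU$ is an ideal of $T\cU$, I first note that the ring structure on $T\cU(X)=T(\Uc X)$ is by definition that of $T(\Uc X)$, so the subset $\I\cU(X)=\I(\Uc X)$ is automatically an ideal of the ring $T\cU(X)$. For the three compatibility conditions (i)--(iii), let $f\in\Gs(X,Y)$. The three structure maps $(T\cU)^{\ast}(f),(T\cU)_{+}(f),(T\cU)_{\bullet}(f)$ are by definition $(\Uc f)^{\ast},(\Uc f)_{+},(\Uc f)_{\bullet}$ of $T$, evaluated on $\Uc X$ and $\Uc Y$. Applying conditions (i), (ii), (iii) for the ideal $\I\subseteq T$ to the $H$-map $\Uc f\in\Hs(\Uc X,\Uc Y)$ therefore yields the three conditions for $\I\cU$ with respect to $f$. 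The only point to keep an eye on is that $(T\cU)_{\bullet}(f)(0)=(\Uc f)_{\bullet}(0)$, so the ``zero shift'' in condition (iii) translates correctly from $\Uc Y$ to $Y$.

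For the natural isomorphism, the key observation is that at every $X\in\Ob(\Gs)$ there is the literal equality of rings
\[
(T/\I)\cU(X)\;=\;(T/\I)(\Uc X)\;=\;T(\Uc X)/\I(\Uc X)\;=\;T\cU(X)/\I\cU(X)\;=\;\bigl((T\cU)/(\I\cU)\bigr)(X).
\]
I therefore take the candidate isomorphism $(T/\I)\cU\to (T\cU)/(\I\cU)$ to be the identity on each $X$. What remains to be checked is that this identity respects the three kinds of structure maps at any $f\in\Gs(X,Y)$. But on either side these maps are obtained by descending the corresponding structure map of $T$ at $\Uc f\in\Hs(\Uc X,\Uc Y)$ through the quotient projection $T(\Uc X)\twoheadrightarrow T(\Uc X)/\I(\Uc X)$ (and similarly for $Y$), so the two constructions agree on the nose. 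Naturality of the isomorphism in $(T,\I)$ then follows directly from the componentwise definition together with the description of morphisms in $\TamH$ given in Corollary~\ref{CorBiset}.

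The main obstacle, though still very mild, is the careful bookkeeping around condition (iii): one must verify that the element ``$f_\bullet(0)$'' occurring in the condition for $\I\cU$ at $f\in\Gs(X,Y)$ is correctly identified with the element ``$(\Uc f)_\bullet(0)$'' occurring in the condition for $\I$ at $\Uc f$. Once this identification is made explicit from the definition of $T\cU$, the rest of the proof is entirely formal.
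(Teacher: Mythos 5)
Your proof is correct and follows exactly the argument the paper intends: the corollary is stated without an explicit proof precisely because everything reduces, as you show, to evaluating the ideal conditions and the quotient construction at $\Uc f\in\Hs(\Uc X,\Uc Y)$, where both sides of the isomorphism become the identical ring $T(\Uc X)/\I(\Uc X)$ with identical induced structure maps. Your attention to the identification $(T\cU)_{\bullet}(f)(0)=T_{\bullet}(\Uc f)(0)$ in condition (iii) is the one point worth making explicit, and you handle it correctly.
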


\begin{cor}\label{RemCompatFrac}
Let $T$ be a Tambara functor on $H$.
In \cite{N_FracTam}, it was shown that for any semi-Mackey subfunctor $\Sl\subseteq T^{\mu}$, the objectwise ring of fractions
\[ \Sl^{-1}T=\{ \Sl(X)^{-1}T(X) \}_{X\in\Ob(\Hs)} \]
carries a natural Tambara functor structure on $H$ induced from that on $T$.

Concerning Corollary \ref{CorBiset}, 
suppose we are given a right-free $H$-$G$-biset $U$. Then $\Sl\cU\subseteq (T\cU)^{\mu}=T^{\mu}\cU$ becomes again a semi-Mackey subfunctor, and we obtain a natural isomorphism of Tambara functors on $G$
\[ (\Sl^{-1}T)\cU\cong (\Sl\cU)^{-1}(T\cU). \]
\end{cor}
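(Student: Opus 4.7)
The plan is to proceed in three steps, closely paralleling the strategy used for Corollary \ref{CorCompatIdeal}.

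First, I would verify that $\Sl\cU$ is a semi-Mackey subfunctor of $(T\cU)^\mu$. The right square of diagram $(\ref{DiagcU})$ shows that $(-)^\mu$ commutes with $-\cU$, so $(T\cU)^\mu = T^\mu\cU$. Applying the functor $-\cU\colon\SMackH\to\SMackG$ to the inclusion $\Sl\hookrightarrow T^\mu$ of semi-Mackey functors on $H$ then yields an inclusion $\Sl\cU\hookrightarrow T^\mu\cU=(T\cU)^\mu$ of semi-Mackey functors on $G$, exhibiting $\Sl\cU$ as a semi-Mackey subfunctor.

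Second, I would construct the candidate isomorphism. For each $X\in\Ob(\Gs)$, the two sides coincide as rings on the nose:
\[ (\Sl^{-1}T)\cU(X) \;=\; (\Sl^{-1}T)(\Uc X) \;=\; \Sl(\Uc X)^{-1}T(\Uc X) \;=\; (\Sl\cU)(X)^{-1}(T\cU)(X), \]
so the natural choice is to take the objectwise identity map as the candidate isomorphism $(\Sl^{-1}T)\cU \xrightarrow{\cong} (\Sl\cU)^{-1}(T\cU)$.

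Third, I would check compatibility with the Tambara structure. For any $f\in\Gs(X,Y)$, each of the operations $f^\ast$, $f_+$, $f_\bullet$ on $(\Sl^{-1}T)\cU$ is by definition $(\Sl^{-1}T)^?(\Uc f)$, which by the fraction construction of \cite{N_FracTam} is obtained from $T^?(\Uc f)$ together with the multiplicative subset $\Sl(\Uc f)$. On the other side, the structure map on $(\Sl\cU)^{-1}(T\cU)$ is obtained from $(T\cU)^?(f)=T^?(\Uc f)$ together with the multiplicative subset $\Sl\cU(f)=\Sl(\Uc f)$ via exactly the same fraction construction, so the two agree. Naturality in $T$ follows formally, since the isomorphism is the identity on underlying sets. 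The main point requiring care is the multiplicative norm $f_\bullet$, whose definition on $\Sl^{-1}T$ in \cite{N_FracTam} is substantially more delicate than that of $f^\ast$ and $f_+$; however, once one unwinds that definition, it depends only on $T_\bullet$, on the underlying ring structures, and on the behaviour of $\Sl$ under the Tambara operations, all of which are preserved verbatim by $\Uc-$, so no extra argument is needed.
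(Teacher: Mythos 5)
Your proposal is correct and coincides with the paper's (implicit) argument: the paper states this as an immediate corollary of Corollary \ref{CorBiset} precisely because the fraction construction of \cite{N_FracTam} is objectwise and its structure maps at a morphism $f$ are built only from the Tambara operations of $T$ at $\Uc f$ and the subsets $\Sl(\Uc X)$, so reindexing along $\Uc-$ commutes with localization on the nose. Your three steps — $\Sl\cU$ is a semi-Mackey subfunctor via functoriality of $-\cU$ on $\SMackH$, the objectwise identification of the localized rings, and the verification that $f^{\ast}$, $f_+$, $f_{\bullet}$ agree — are exactly what a written-out proof would contain.
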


\section{Adjoint construction}

In the rest, we construct a left adjoint of $-\cU\colon\TamH\rightarrow\TamG$ constructed in Corollary \ref{CorBiset}.
We use the following theorem shown in \cite{Tam}. 
\begin{fact}
Let $G$ be a finite group. There exists a category $\U_G$ with finite products satisfying the following properties.
\begin{enumerate}
\item $\Ob(\U_G)=\Ob(\Gs)$.
\item There is a categorical equivalence $\mu_G\colon \Add(\U_G,\Sett)\overset{\simeq}{\longrightarrow}\STamG$.
\end{enumerate}
\end{fact}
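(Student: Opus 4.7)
The plan is to follow Tambara's original approach by realizing semi-Tambara functors as product-preserving presheaves on a category of ``bivariant polynomials.''

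First I would define $\U_G$ with $\Ob(\U_G)=\Ob(\Gs)$ and with a morphism $X\to Y$ being an isomorphism class of diagrams
\[ X\xleftarrow{p}A\xrightarrow{f}B\xrightarrow{q}Y \]
in $\Gs$, where two such diagrams are identified if they are related by a pair of $\Gs$-isomorphisms on the middle terms $A$ and $B$ making the obvious squares commute. The identity on $X$ is the class of $X\xleftarrow{\id}X\xrightarrow{\id}X\xrightarrow{\id}X$. Composition is the delicate step: given a second polynomial $Y\leftarrow A'\to B'\to Z$, one pulls back $A'$ along $B\to Y$ and then invokes the exponential diagram generated by $A\to B$ and the pulled-back $A'$ to assemble a canonical polynomial $X\leftarrow\widetilde{A}\to\widetilde{B}\to Z$. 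Well-definedness on equivalence classes, the unit law, and associativity (which requires iterating exponential diagrams and comparing them via their universal properties) are the core technical checks.

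Next I would show that $\U_G$ has finite products, with the empty $G$-set as terminal object and disjoint union in $\Gs$ as cartesian product in $\U_G$. The universal property follows because a polynomial out of $X\sqcup Y$ decomposes canonically by restricting $A$ and $B$ along the inclusions of $X$ and $Y$, yielding a bijection with pairs of polynomials, one out of each summand.

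With $\U_G$ constructed, I would define $\mu_G\colon\Add(\U_G,\Sett)\to\STamG$ by $\mu_G(F)(X)=F(X)$, recovering the three structure maps as the values of $F$ on ``pure'' polynomials in which two of the three arrows are identities: $p^{\ast}=F(X\xleftarrow{p}Y\xrightarrow{\id}Y\xrightarrow{\id}Y)$, $q_{+}=F(X\xleftarrow{\id}X\xrightarrow{\id}X\xrightarrow{q}Y)$, and $f_{\bullet}=F(X\xleftarrow{\id}X\xrightarrow{f}Y\xrightarrow{\id}Y)$. Product-preservation of $F$ supplies the semi-Mackey additivity of both $T^{\alpha}$ and $T^{\mu}$, while functoriality of $F$ on composites of these pure polynomials forces precisely the distributive law of Definition \ref{DefTamFtr}, because composition in $\U_G$ was engineered from exponential diagrams. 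A candidate quasi-inverse sends a semi-Tambara functor $T$ to the functor that assigns to a general polynomial the composite $q_{+}\circ f_{\bullet}\circ p^{\ast}$.

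The main obstacle I expect is verifying that this assignment respects composition in $\U_G$: unwinding the definition reduces the problem to the commutativity of a diagram obtained by splicing an exponential diagram in $\Gs$ with Mackey-type pullback squares governing $p^{\ast}$, $q_{+}$, and $f_{\bullet}$, so all the semi-Tambara axioms are invoked simultaneously. Once this is established, both round-trip natural transformations between $\mu_G$ and its quasi-inverse are manifestly objectwise the identity, and they agree on morphisms because every arrow of $\U_G$ factors as a pure contravariant, followed by a pure multiplicative, followed by a pure additive arrow, and every semi-Tambara functor is determined by its three corresponding structure maps.
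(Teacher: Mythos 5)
Your proposal is correct and follows essentially the same route as the source: the paper states this as a Fact cited from Tambara's \emph{On multiplicative transfer} and only recalls the structure of $\U_G$, and your construction — morphisms as isomorphism classes of bispans $X\leftarrow A\rightarrow B\rightarrow Y$, composition assembled from a pullback followed by an exponential diagram, products given by disjoint union with projections $R_{\iota_X},R_{\iota_Y}$, and the equivalence $\mu_G$ recovering $T^{\ast},T_{\bullet},T_{+}$ from the pure morphisms $R_p,N_f,T_q$ with quasi-inverse $[X\leftarrow A\rightarrow B\rightarrow Y]\mapsto q_{+}\circ f_{\bullet}\circ p^{\ast}$ — coincides with exactly that recalled structure.
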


We recall the structure of $\U$ briefly. Details can be found in \cite{Tam}.

The set of morphisms $\U_G(X,Y)$ is defined as follows, for each $X,Y\in\Ob(\U_G)=\Ob(\Gs)$.
\begin{eqnarray*}
\U_G(X,Y)=\Set{(X\overset{w}{\leftarrow}A\overset{v}{\rightarrow}B\overset{u}{\rightarrow}Y)|
\begin{array}
[c]{c}
A,B\in \Ob(\Gs),\ u\in\Gs(B,Y)\\
v\in \Gs(A,B),\ w \in \Gs(A,X)
\end{array}
} /\underset{\text{equiv.} }{\sim }\ ,
\end{eqnarray*}
where $(X\overset{w}{\leftarrow}A\overset{v}{\rightarrow}B\overset{u}{\rightarrow}Y)$ and $(X\overset{w\ppr}{\leftarrow}A\ppr\overset{v\ppr}{\rightarrow}B\ppr\overset{u\ppr}{\rightarrow}Y)$ are equivalent if and only if there exists a pair of isomorphisms $a\colon A\rightarrow A\ppr$ and $b\colon B\rightarrow B\ppr$ such that $u=u\ppr\circ b$, $b\circ v=v\ppr\circ a$, $w=w\ppr\circ a$.
\[
\xy(20,0)*+{Y}="0";
(7,7)*+{B}="2";
(7,-7)*+{B\ppr}="4";
(-7,7)*+{A}="6";
(-7,-7)*+{A\ppr}="8";
(-20,0)*+{X}="10";
(4,0)*+{}="12";
(-4,0)*+{}="14";
{\ar^{u} "2";"0"};
{\ar_{u\ppr} "4";"0"};
{\ar_{b} "2";"4"};
{\ar^{v} "6";"2"};
{\ar_{v\ppr} "8";"4"};
{\ar^{a} "6";"8"};
{\ar_{w} "6";"10"};
{\ar^{w\ppr} "8";"10"};
{\ar@{}|\circlearrowright "2";"8"};
{\ar@{}|\circlearrowright "0";"12"};
{\ar@{}|\circlearrowright "10";"14"};
\endxy
\]

Let $[X\overset{w}{\leftarrow}A\overset{v}{\rightarrow}B\overset{u}{\rightarrow}Y]$ denote the equivalence class of $(X\overset{w}{\leftarrow}A\overset{v}{\rightarrow}B\overset{u}{\rightarrow}Y)$.
The composition law in $\U_G$ is defined by $[Y\leftarrow C\rightarrow D\rightarrow Z]\circ [ X\leftarrow A\rightarrow B\rightarrow Y]=[X\leftarrow A^{\prime \prime }\rightarrow \widetilde{D}\rightarrow Z]$, with the morphisms appearing in the following diagram:
\[
\xy
(34,0)*+{Z}="0";
(22,5.6)*+{D}="2";
(20,22.4)*+{\widetilde{D}}="4";
(8,28)*+{\widetilde{C}}="6";
(-4,33.6)*+{A^{\prime\prime}}="8";
(10,11.2)*+{C}="10";
(-2,16.8)*+{B^{\prime}}="12";
(-14,22.4)*+{A^{\prime}}="14";
(0,0)*+{Y}="16";
(-12,5.6)*+{B}="18";
(-24,11.2)*+{A}="20";
(-34,0)*+{X}="22";
(12,17.6)*+{exp}="24";
(-13,14.4)*+{\square}="26";
(-1,8.4)*+{\square}="28";
(-3,25.2)*+{\square}="30";
{\ar"2";"0"};
{\ar"4";"2"};
{\ar"6";"4"};
{\ar"8";"6"};
{\ar"10";"2"};
{\ar"12";"10"};
{\ar"14";"12"};
{\ar"6";"12"};
{\ar"8";"14"};
{\ar"14";"20"};
{\ar"10";"16"};
{\ar"12";"18"};
{\ar"18";"16"};
{\ar"20";"18"};
{\ar"20";"22"};
\endxy
\]


For any $X,Y\in \Ob(\U_G)$, we use the notation
\begin{itemize}
\item $T_u=[X\overset{\id}{\leftarrow}X\overset{\id}{\rightarrow}X\overset{u}{\rightarrow}Y]$ for any $u\in\Gs(X,Y)$,
\item $N_v=[X\overset{\id}{\leftarrow}X\overset{v}{\rightarrow}Y\overset{\id}{\rightarrow}Y]$ for any $v\in\Gs(X,Y)$,
\item $R_w=[X\overset{w}{\leftarrow}Y\overset{\id}{\rightarrow}Y\overset{\id}{\rightarrow}Y]$ for any $w\in\Gs(Y,X)$.
\end{itemize}

\begin{rem}\label{RemProduct}
For any pair of objects $X,Y\in\Ob(\U_G)$, if we let $X\amalg Y$ be their disjoint union in $\Gs$ and let $\iota_X\in\Gs(X,X\amalg Y), \iota_Y\in\Gs(Y,X\amalg Y)$ be the inclusions, then
\[ X\overset{R_{\iota_X}}{\longleftarrow}X\amalg Y\overset{R_{\iota_Y}}{\longrightarrow} Y \]
gives the product of $X$ and $Y$ in $\U_G$.
\end{rem}

\begin{rem}\label{RemCorresp}
For any $\T\in\Ob(\Add(\U_G,\Sett))$, the corresponding semi-Tambara functor $T=\mu_G(\T)\in\Ob(\STamG)$ is given by
\begin{itemize}
\item $T(X)=\T(X)$ for any $X\in\Ob(\Gs)$.
\item $T^{\ast}(f)=\T(R_f)$, \ $T_{\bullet}(f)=\T(N_f)$, \ $T_+(f)=\T(T_f)$, for any morphism $f$ in $\Gs$.
\end{itemize}
\end{rem}

As a corollary of Proposition \ref{PropBiset}, the following holds.
\begin{cor}\label{CorInduced}
Let $U$ be a right-free $H$-$G$-set. Then $\Uc-\colon\Gs\rightarrow\Hs$ induces a functor $F_U\colon\U_G\rightarrow\U_H$
preserving finite products, given by
\[ F_U(X)=\Uc X \]
for any $X\in\Ob(\Gs)$ and
\[ F_U([X\overset{w}{\leftarrow}A\overset{v}{\rightarrow}B\overset{u}{\rightarrow}Y])=[\Uc X\overset{\Uc w}{\longleftarrow}\Uc A\overset{\Uc v}{\longrightarrow}\Uc B\overset{\Uc u}{\longrightarrow}\Uc Y]  \]
for any morphism $[X\overset{w}{\leftarrow}A\overset{v}{\rightarrow}B\overset{u}{\rightarrow}Y]\in\U_G(X,Y)$.
\end{cor}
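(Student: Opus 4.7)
The plan is to verify three properties in sequence: well-definedness on morphisms, functoriality (identities and composition), and preservation of finite products. In each case the argument reduces to basic properties of $\Uc-\colon\Gs\to\Hs$: it is a functor preserving finite coproducts, pullbacks, and, by Proposition \ref{PropBiset}, exponential diagrams.

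For well-definedness, suppose $(X\overset{w}{\leftarrow}A\overset{v}{\to}B\overset{u}{\to}Y)$ and $(X\overset{w\ppr}{\leftarrow}A\ppr\overset{v\ppr}{\to}B\ppr\overset{u\ppr}{\to}Y)$ represent the same morphism in $\U_G$ via $G$-isomorphisms $a\colon A\to A\ppr$ and $b\colon B\to B\ppr$. Since $\Uc-$ is a functor, $\Uc a$ and $\Uc b$ are $H$-isomorphisms, and the relations $u=u\ppr\circ b$, $b\circ v=v\ppr\circ a$, $w=w\ppr\circ a$ transfer to the corresponding relations in $\Hs$. Hence the two images are equivalent in $\U_H$. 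For functoriality, preservation of identities is immediate from the formula. For composition, the composite $[Y\leftarrow C\to D\to Z]\circ[X\leftarrow A\to B\to Y]$ in $\U_G$ is constructed from the large diagram in the text, built out of three pullback squares and one canonical exponential diagram. Applying $\Uc-$ pointwise to this construction diagram, pullbacks remain pullbacks (by Bouc's result) and the exponential diagram remains an exponential diagram (by Proposition \ref{PropBiset}). The resulting diagram is precisely the composition diagram in $\U_H$ computing the composite of $F_U[Y\leftarrow C\to D\to Z]$ with $F_U[X\leftarrow A\to B\to Y]$, so $F_U$ respects composition.

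Finally, for product preservation, by Remark \ref{RemProduct} the product of $X,Y$ in $\U_G$ is exhibited by $X\overset{R_{\iota_X}}{\longleftarrow}X\amalg Y\overset{R_{\iota_Y}}{\longrightarrow}Y$. Since $\Uc-$ preserves finite coproducts, the canonical comparison $\Uc X\amalg\Uc Y\to\Uc(X\amalg Y)$ is an $H$-isomorphism identifying $\Uc\iota_X,\Uc\iota_Y$ with the coproduct inclusions of $\Uc X\amalg\Uc Y$ in $\Hs$; hence $F_U$ sends the product of $X,Y$ in $\U_G$ to the product of $\Uc X,\Uc Y$ in $\U_H$. The terminal object of $\U_G$ corresponds to the empty $G$-set, which is likewise preserved by $\Uc-$. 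The only nontrivial verification is the composition step, where one must match the image of the $\U_G$-composition diagram under $\Uc-$ with the $\U_H$-composition diagram of the images; this is the step that crucially invokes Proposition \ref{PropBiset}, while the rest is essentially functoriality of $\Uc-$.
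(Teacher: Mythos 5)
Your proof is correct and follows essentially the same route as the paper: well-definedness and functoriality follow from $\Uc-$ preserving coproducts, pullbacks, and (by Proposition \ref{PropBiset}) exponential diagrams, and product preservation follows from Remark \ref{RemProduct}. The paper states this in one sentence; your write-up simply makes the same steps explicit, including the (correct) extra observation about the terminal object.
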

\begin{proof}
Since $\Uc-\colon\Gs\rightarrow\Hs$ preserves finite coproducts, pullbacks and exponential diagrams, it immediately follows that $F_U$ preserves the compositions, and thus in fact becomes a functor. Moreover by Remark \ref{RemProduct}, $F_U$ preserves finite products.
\end{proof}

\begin{rem}\label{RemCompUU}
The biset transformation obtained in Corollary \ref{CorBiset} is compatible with the composition by $F_U$ $:$
\[
\xy
(-16,6)*+{\Add(\U_H,\Sett)}="0";
(16,6)*+{\Add(\U_G,\Sett)}="2";
(-16,-6)*+{\STamH}="4";
(16,-6)*+{\STamG}="6";
{\ar^{-\circ F_U} "0";"2"};
{\ar_{\mu_H}^{\simeq} "0";"4"};
{\ar^{\mu_G}_{\simeq} "2";"6"};
{\ar_{-\cU} "4";"6"};
{\ar@{}|\circlearrowright "0";"6"};
\endxy
\]
\end{rem}

\bigskip

In the following argument, we construct a functor
\[ L_{F_U}\colon \Fun(\U_G,\Sett)\rightarrow \Fun(\U_G,\Sett) \]
for each right-free $H$-$G$-biset $U$.
In fact, we associate a functor $L_F\colon \Fun(\U_G,\Sett)\rightarrow \Fun(\U_G,\Sett)$ to any functor $F\colon \U_G\rightarrow\U_H$ preserving finite products.
The construction involves Kan extension, and basically depends on \cite{Borceux}.

\begin{dfn}\label{DefAX}
Let $G,H$ be arbitrary finite groups, and let $F\colon\U_G\rightarrow\U_H$ be a functor preserving finite products.
For any $X\in\Ob(\U_H)$, define a category $\C_X$ and a functor $\A_X\colon\C_X\rightarrow\U_G$ as follows.
\begin{itemize}
\item[-] An object $\efr=\kna$ in $\C_X$ is a pair of a finite $G$-set $E$ and $\kappa\in\U_H(F(E),X)$.
\item[-] A morphism in $\C_X$ from $\efr$ to $\efr\ppr=\knap$ is a morphism $a\in\U_G(E,E\ppr)$ satisfying $\kappa=\kappa\ppr\circ F(a)$.
\[
\xy
(-12,7)*+{F(E)}="0";
(12,7)*+{F(E\ppr)}="2";
(0,-6)*+{X}="4";
(0,10)*+{}="5";
{\ar^{F(a)} "0";"2"};
{\ar_{\kappa} "0";"4"};
{\ar^{\kappa\ppr} "2";"4"};
{\ar@{}|\circlearrowright "4";"5"};
\endxy
\]
\item[-] For any $\efr\in\Ob(\C_X)$, define $\A_X(\efr)\in\Ob(\U_G)$ by $\A_X(\efr)=E$.
\item[-] For any morphism $a\in\C_X(\efr,\efr\ppr)$, define $\A_X(a)\in\U_G(\A_X(\efr),\A_X(\efr\ppr))$ by $\A_X(a)=a\colon E\rightarrow E\ppr$.
\end{itemize}
\end{dfn}

\begin{dfn}\label{DefL_F}
Let $G,H,F$ be as in Definition \ref{DefAX}, and let $\T$ be any object in $\Fun(\U_G,\Sett)$.
Using the functor $\A_X\colon\C_X\rightarrow\U_G$ in Definition \ref{DefAX}, we define $(L_F\T)(X)\in\Ob(\Sett)$ by
\[ (L_F\T)(X)=\colim(\T\circ\A_X) \]
for each $X\in\Ob(\U_H)$.

For any morphism $\upsilon\in\U_H(X,Y)$, composition by $\upsilon$ induces a functor
\begin{eqnarray*}
\upsilon_{\sharp}\ \colon\ \C_X&\rightarrow&\C_Y,\\
\kna&\mapsto&\knaf
\end{eqnarray*}
compatibly with $\A_X$ and $\A_Y$.
\[
\xy
(-12,7)*+{\C_X}="0";
(12,7)*+{\C_Y}="2";
(0,-6)*+{\U_G}="4";
(0,10)*+{}="5";
{\ar^{\upsilon_{\sharp}} "0";"2"};
{\ar_{\A_X} "0";"4"};
{\ar^{\A_Y} "2";"4"};
{\ar@{}|\circlearrowright "4";"5"};
\endxy
\]
This yields a natural map
\[ (L_F\T)(\upsilon)\colon\colim(\T\circ\A_X)\rightarrow\colim(\T\circ\A_Y), \]
and $L_F\T$ becomes a functor $L_F\T\colon \U_H\rightarrow\Sett$.

Moreover, if $\varphi\colon\T\rightarrow\ST$ is a morphism between $\T,\ST\in\Fun(\U_G,\Sett)$, this induces a natural transformation
\[ \varphi\circ\A_X\colon\T\circ\A_X\Longrightarrow\ST\circ\A_X \]
and thus a map of sets
\[ (L_F\T)(X)\rightarrow(L_F\ST)(X) \]
for each $X$.
These form a natural transformation from $L_F\T$ to $L_F\ST$, which we denote by $L_F\varphi$ $:$
\[ L_F\varphi\colon L_F\T\Longrightarrow L_F\ST. \]
This gives a functor $L_F\colon\Fun(\U_G,\Sett)\rightarrow\Fun(\U_H,\Sett)$.
\end{dfn}

\medskip

This functor satisfies the following property.
\begin{prop}\label{PropAdjoint}
For any functor $F\colon\U_G\rightarrow\U_H$ preserving finite products, we have the following.
\begin{enumerate}
\item If $\T$ belongs to $\Add(\U_G,\Sett)$, then $L_F\T$ also belongs to $\Add(\U_H,\Sett)$. Thus, $L_F$ defines a functor
\[ L_F\colon \Add(\U_G,\Sett)\rightarrow \Add(\U_H,\Sett). \]
\item The functor obtained in {\rm (1)} is left adjoint to the functor
\[ -\circ F\colon\Add(\U_H,\Sett)\rightarrow\Add(\U_G,\Sett), \]
which is defined by the composition of $F$.
\end{enumerate}
\end{prop}

By virtue of Remark \ref{RemCompUU}, this leads to the following theorem.
\begin{thm}\label{ThmAdj}
Let $U$ be a right-free $H$-$G$-biset. Then the functor
\[ \mathfrak{L}_U=\mu_H\circ L_{F_U}\circ \mu_G^{-1}\colon \STamG\rightarrow\STamH \]
gives a left adjoint of the biset transformation functor
$-\cU\colon\STamH\rightarrow\STamG$ obtained in Corollary \ref{CorBiset}.
\end{thm}

\begin{rem}
A similar argument proves that $-\cU\colon\SMackH\rightarrow\SMackG$ admits a left adjoint $\mathcal{L}_U\colon\SMackG\rightarrow\SMackH$. (For the case of Mackey functors, see also \cite{Bouc}.)
\end{rem}

As a corollary of the theorem, we will obtain the following.

\begin{cor}\label{CorAdj1}
Let $U$ be a right-free $H$-$G$-biset. Then the functor $-\cU\colon\TamH\rightarrow\TamG$ admits a left adjoint.
\end{cor}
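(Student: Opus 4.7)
The plan is to exhibit the left adjoint as a composite of three adjoints that are already at hand. Write $i_G\colon\TamG\hookrightarrow\STamG$ and $i_H\colon\TamH\hookrightarrow\STamH$ for the (fully faithful) inclusions, with left adjoints $\gamma_G$ and $\gamma_H$ as in Remark \ref{RemTamThm}, and let $\mathfrak{L}_U\dashv(-\cU)\colon\STamH\to\STamG$ be the adjunction from Theorem \ref{ThmAdj}. The key additional ingredient is the strict commutativity $i_G\circ(-\cU)=(-\cU)\circ i_H$ recorded in the left square of diagram $(\ref{DiagcU})$.

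I would then define
\[ \widetilde{\mathfrak{L}}_U\,=\,\gamma_H\circ\mathfrak{L}_U\circ i_G\ \colon\ \TamG\longrightarrow\TamH, \]
and verify the adjunction by chaining bijections, naturally in $T\in\Ob(\TamG)$ and $S\in\Ob(\TamH)$:
\begin{eqnarray*}
\TamH(\widetilde{\mathfrak{L}}_U T,\,S)&\cong&\STamH(\mathfrak{L}_U i_G T,\, i_H S)\\
&\cong&\STamG(i_G T,\, (i_H S)\cU)\\
&\cong&\STamG(i_G T,\, i_G(S\cU))\\
&\cong&\TamG(T,\, S\cU).
\end{eqnarray*}
The first isomorphism uses $\gamma_H\dashv i_H$, the second uses $\mathfrak{L}_U\dashv(-\cU)$, the third is the compatibility $(-\cU)\circ i_H=i_G\circ(-\cU)$, and the last is the full faithfulness of $i_G$.

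There is no genuine obstacle, since the substantive construction has already been carried out in Theorem \ref{ThmAdj}. The only points worth a line of remark are (i) that the compatibility in $(\ref{DiagcU})$ is a strict equality of functors, so the third step above is an identity rather than merely a natural isomorphism, and (ii) that each bijection in the chain is natural in both variables, so the composite exhibits $\widetilde{\mathfrak{L}}_U$ as a genuine left adjoint to $-\cU\colon\TamH\to\TamG$.
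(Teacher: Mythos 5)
Your proposal is correct and is essentially the paper's own argument: the paper also takes $\gamma_H\circ\mathfrak{L}_U$ (implicitly restricted along the inclusion $\TamG\hookrightarrow\STamG$) as the left adjoint, obtained by composing the adjunction of Theorem \ref{ThmAdj} with $\gamma_H\dashv i_H$ and using the compatibility in $(\ref{DiagcU})$. Your write-up merely makes explicit the chain of natural bijections that the paper leaves to the reader.
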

\begin{proof}
This immediately follows from Theorem \ref{ThmAdj}. In fact $\gamma_H\circ \mathfrak{L}_U$ gives the left adjoint. We also abbreviate this functor to $\mathfrak{L}_U$.
\end{proof}

\begin{cor}\label{CorAdj2}
Let $U$ be a right-free $H$-$G$-biset. The functors $\mathfrak{L}_U$ and $\mathcal{L}_U$ are compatible.
\[
\xy
(-14,7)*+{\TamH}="0";
(14,7)*+{\TamG}="2";
(-14,-7)*+{\SMackH}="4";
(14,-7)*+{\SMackG}="6";
(24,-8)*+{,}="7";
{\ar_{\mathfrak{L}_U} "2";"0"};
{\ar^{\Omega_H{[}{-]}} "4";"0"};
{\ar_{\Omega_G{[}{-]}} "6";"2"};
{\ar^{\mathcal{L}_U} "6";"4"};
{\ar@{}|\circlearrowright "0";"6"};
\endxy
\]
\end{cor}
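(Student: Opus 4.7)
The plan is to deduce the commutativity by matching left adjoints: I will show that the two composite functors $\SMackG\to\TamH$ under consideration share a common right adjoint, and then invoke uniqueness of adjoints. First I would assemble the four adjunctions in play. By Corollary \ref{CorAdj1}, $\mathfrak{L}_U$ is left adjoint to $-\cU\colon\TamH\to\TamG$; by Remark \ref{RemTamFtr}, $\Omega_G[-]$ is left adjoint to $(-)^{\mu}\colon\TamG\to\SMackG$ and $\Omega_H[-]$ is left adjoint to $(-)^{\mu}\colon\TamH\to\SMackH$; and by the remark preceding Corollary \ref{CorAdj1}, $\mathcal{L}_U$ is left adjoint to $-\cU\colon\SMackH\to\SMackG$. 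Composing adjunctions, $\mathfrak{L}_U\circ\Omega_G[-]\colon\SMackG\to\TamH$ is left adjoint to $(-)^{\mu}\circ(-\cU)\colon\TamH\to\SMackG$, while $\Omega_H[-]\circ\mathcal{L}_U\colon\SMackG\to\TamH$ is left adjoint to $(-\cU)\circ(-)^{\mu}\colon\TamH\to\SMackG$.

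Next I would invoke the compatibility diagram $(\ref{DiagcU})$ of the previous section. Its bottom rectangle asserts that the two composites $\STamH\to\SMackG$ agree, and restricting to the full subcategory $\TamH\subseteq\STamH$ (using the top square together with the fact that $T\cU$ is again a Tambara functor, by Corollary \ref{CorBiset}) identifies $(-)^{\mu}\circ(-\cU)$ with $(-\cU)\circ(-)^{\mu}$ as functors $\TamH\to\SMackG$. Indeed both send $T\in\Ob(\TamH)$ to the semi-Mackey functor $X\mapsto T(\Uc X)$ on $G$ equipped with the multiplicative structure inherited from $T$. Hence $\mathfrak{L}_U\circ\Omega_G[-]$ and $\Omega_H[-]\circ\mathcal{L}_U$ have a common right adjoint, and the desired natural isomorphism of left adjoints is automatic from the uniqueness of adjoints.

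There is no real obstacle here; the only subtleties are bookkeeping, namely to confirm that composing adjunctions produces the right adjoints in the correct (reversed) order, and that the compatibility square of $(-)^{\mu}$ and $-\cU$ truly restricts from the semi-Tambara level to the Tambara level. Both are immediate from the definitions and from Corollary \ref{CorBiset}.
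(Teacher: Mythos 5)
Your proposal is correct and follows essentially the same route as the paper: the paper's own proof is exactly the observation that the commutativity of $(\ref{DiagcU})$ identifies the two right adjoints $(-)^{\mu}\circ(-\cU)$ and $(-\cU)\circ(-)^{\mu}$ on $\TamH$, whence the square of left adjoints commutes by uniqueness of adjoints. Your write-up merely spells out the bookkeeping that the paper leaves implicit.
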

\begin{proof}
This follows from the commutativity of $(\ref{DiagcU})$, and the uniqueness of left adjoint functors.
\end{proof}

In the rest, we show {\rm (1)} and {\rm (2)} in Proposition \ref{PropAdjoint}.
First we remark that {\rm (2)} follows from {\rm (1)} and the following.

\begin{rem}(cf. Theorem 3.7.7 in \cite{Borceux}) \label{Rem_Borceux}
$L_F$ is left adjoint to $-\circ F\colon\Fun(\U_H,\Sett)\rightarrow\Fun(\U_G,\Sett)$.
\end{rem}
\begin{proof}
For any $X\in\Ob(\U_H)$, we abbreviate $\T\circ\A_X$ to $\T_X$. We denote the colimiting cone for $\T_X$ by
\[ \delta_X\colon\T_X\Longrightarrow\Delta_{L_F\T(X)}, \]
where $\Delta_{L_F\T(X)}\colon\C_X\rightarrow\Sett$ is the constant functor valued in $L_F\T(X)$ (\cite{MacLane}).

\medskip

We briefly state the construction of the bijection
\begin{eqnarray*}
&\xy
(-24,8)*+{\Nat_{(\U_G,\Sett)}(\T,\mathcal{S}\circ F)}="0";
(20,8)*+{\Nat_{(\U_H,\Sett)}(L_F\T,\mathcal{S})}="2";
(-24,4)*+{\rotatebox{90}{$\in$}}="4";
(20,4)*+{\rotatebox{90}{$\in$}}="6";
(-24,0)*+{\theta}="8";
(-12,0)*+{}="9";
(8,0)*+{}="11";
(20,0)*+{\omega}="12";
{\ar^{\cong} "0";"2"};
{\ar@{<->} "9";"11"};
\endxy&\\
&({}^{\forall}\T\in\Ob(\Fun(\U_G,\Sett)),\ {}^{\forall}\mathcal{S}\in\Ob(\Fun(\U_H,\Sett))).&
\end{eqnarray*}

Suppose we are given $\omega\in\Nat_{(\U_H,\Sett)}(L_F\T,\mathcal{S})$. For any $A\in\Ob(\U_G)$, the object $(A,\id_{F(A)})$ is terminal in $\C_{F(A)}$, and $\delta_{F(A)}$ becomes an isomorphism. 
The compositions
\[ \theta_{\omega,A}=\omega_{F(A)}\circ\delta_{F(A),(A,\id_A)}=\big(\T(A)\overset{\delta_{F(A),(A,\id_A)}}{\longrightarrow}L_F\T(F(A))\overset{\omega_{F(A)}}{\longrightarrow}\mathcal{S}(F(A))\big) \]
 form a natural transformation $\theta_{\omega}\colon\T\rightarrow\mathcal{S}\circ F$.

Conversely, suppose we are given $\theta\in\Nat_{(\U_G,\Sett)}(\T,\mathcal{S}\circ F)$. For any $X\in\Ob(\U_H)$ and any morphism $a\in\C_X(\efr,\efr\ppr)$ between
\begin{eqnarray*}
\efr=\kna,\quad \efr\ppr=\knap,
\end{eqnarray*}
we have a commutative diagram in $\Sett$
\[
\xy
(-34,7)*+{\T_X(\efr)}="0";
(-18,7)*+{\T(E)}="2";
(8,7)*+{\mathcal{S}\circ F(E)}="4";
(0,0)*+{}="5";
(32,0)*+{\mathcal{S}(X).}="6";
(-34,-7)*+{\T_X(\efr\ppr)}="10";
(-18,-7)*+{\T(E\ppr)}="12";
(8,-7)*+{\mathcal{S}\circ F(E\ppr)}="14";
{\ar@{=} "0";"2"};
{\ar^{\theta_{E}} "2";"4"};
{\ar^{\mathcal{S}(\kappa)} "4";"6"};
{\ar@{=} "10";"12"};
{\ar_{\theta_{E\ppr}} "12";"14"};
{\ar_{\mathcal{S}(\kappa\ppr)} "14";"6"};
{\ar_{\T_X(a)} "0";"10"};
{\ar^{\T(a)} "2";"12"};
{\ar|(0.45)*+{_{\mathcal{S}\circ F(a)}} "4";"14"};
{\ar@{}|\circlearrowright "0";"12"};
{\ar@{}|\circlearrowright "2";"14"};
{\ar@{}|\circlearrowright "5";"6"};
\endxy
\]
This gives a cone $\T_X\Longrightarrow\Delta_{\mathcal{S}(X)}$, and thus there induced a map $\omega_{\theta,X}\colon L_F\T(X)\rightarrow\mathcal{S}(X)$ for each $X\in\Ob(\U_H)$. These form a natural transformation $\omega_{\theta}\colon L_F\T\rightarrow\mathcal{S}$.
\end{proof}

\smallskip


It remains to show {\rm (1)} in Proposition \ref{PropAdjoint}. By definition, This is equal to the following.

\begin{claim}\label{ClaimAdd}
If $\T$ belongs to $\Add(\U_G,\Sett)$, then for each pair of objects $X,Y$ in $\U_H$, the natural map
\[ (L_F\T(R_{\iota_X}),L_F\T(R_{\iota_Y}))\colon L_F\T(X\amalg Y)\longrightarrow L_F\T(X)\times L_F\T(Y) \]
is bijective, where $\iota_X\colon X\hookrightarrow X\amalg Y, \iota_Y\colon Y\hookrightarrow X\amalg Y$ are the inclusions in $\Hs$.
\end{claim}

\medskip

To show Claim \ref{ClaimAdd}, we prepare a set $Z=\colim (\T_X\ast\T_Y)$ and a map $(\pi_X,\pi_Y)\colon Z\rightarrow L_F\T(X)\times L_F\T(Y)$ as follows.

\begin{constr}\label{Const}
Let $\T,X,Y$ be as in Claim \ref{ClaimAdd}.
\begin{enumerate}
\item
For any pair of objects $X,Y\in\Ob(\U_H)$, define $\A_X\ast\A_Y$ to be the composition of functors
\begin{eqnarray*}
\C_X\times\C_Y\overset{\A_X\times\A_Y}{\longrightarrow}\U_G\times\U_G&\overset{\amalg}{\longrightarrow}&\U_G.\\
(A,B)&\mapsto&A\amalg B
\end{eqnarray*}
Since $\T$ is additive, $\T\circ(\A_X\ast\A_Y)$ becomes naturally isomorphic to
\[ \C_X\times\C_Y\overset{\T_X\times\T_Y}{\longrightarrow}\Sett\times\Sett\overset{\times}{\longrightarrow}\Sett. \]
We abbreviate this to $\T_X\ast\T_Y$, put $Z=\colim(\T_X\ast\T_Y)$ and denote the colimiting cone for $\T_X\ast\T_Y$ by
\[ \delta\colon\T_X\ast\T_Y\Longrightarrow\Delta_Z. \]

\item
Let $\C_X\times\C_Y\overset{\pr_X}{\longrightarrow}\C_X$ be the projection, and let $\wp_X\colon\T_X\ast\T_Y\Longrightarrow\T_X\circ\pr_X$
be the natural transformation induced from the projection.
\[
\xy
(-12,7)*+{\C_X\times\C_Y}="0";
(11,7)*+{\C_X}="2";
(0,-6)*+{\Sett}="4";
(0,10)*+{}="5";
{\ar^{\pr_X} "0";"2"};
{\ar_{\T_X\ast\T_Y} "0";"4"};
{\ar^{\T_X} "2";"4"};
{\ar@{=>}^{\wp_X} (-2.5,1.5);(2.5,1.5)};
\endxy
\]
By the universality of the colimiting cone, there uniquely exists a map of sets
\[ \pi_X\colon Z\rightarrow L_F\T(X) \]
which makes the following diagram of natural transformations commutative.
\begin{equation}
\label{EqStar}
\xy
(-12,6)*+{\T_X\ast\T_Y}="0";
(12,6)*+{\T_X\circ\pr_X}="2";
(-12,-6)*+{\Delta_Z}="4";
(12,-3.4)*+{}="5";
(18,-6.2)*+{(\Delta_{L_F\T(X)})\circ\pr_X}="6";
(4.2,-6)*+{}="7";
{\ar@{=>}^{\wp_X} "0";"2"};
{\ar@{=>}_{\delta} "0";"4"};
{\ar@{=>}^>>>>{\delta_X\circ\pr_X} "2";"5"};
{\ar@{=>}_>>>>>>>>{\pi_X} "4";"7"};
{\ar@{}|\circlearrowright "2";"4"};
\endxy
\end{equation}
Similarly, we have a canonical map $\pi_Y\colon Z\rightarrow L_F\T(Y)$. 
Thus we obtain a natural map
\[ (\pi_X,\pi_Y)\colon Z\rightarrow L_F\T(X)\times L_F\T(Y), \]
which is shown to be bijective, as in Lemma 3.7.6 in \cite{Borceux}.

\end{enumerate}
\end{constr}

\begin{dfn}\label{Defsx}
Let $X,Y\in\Ob(\U_H)$ be any pair of objects. For any
\[ \sfr=\sna\in\Ob(\C_{X\amalg Y}), \]
define $\sfr_X\in\Ob(\C_X)$ and $\sfr_Y\in\Ob(\C_Y)$ by
\begin{eqnarray*}
\sfr_X=(R_{\iota_X})_{\sharp}(\sfr)&\in&\Ob(\C_X),\\
\sfr_Y=(R_{\iota_Y})_{\sharp}(\sfr)&\in&\Ob(\C_Y),
\end{eqnarray*}
where $\iota_X\colon X\hookrightarrow X\amalg Y, \iota_Y\colon Y\hookrightarrow X\amalg Y$ are the inclusions in $\Hs$.
\end{dfn}

\begin{dfn}\label{Defk+l}
Let $X,Y\in\Ob(\U_H)$ be arbitrary objects. For any
$\efr=\kna\ \in\,\Ob(\C_X)$
and
$\dfr=\lna\ \in\,\Ob(\C_Y)$,
define $\efr\amalg\dfr\in\Ob(\C_{X\amalg Y})$ by
\[ \efr\amalg\dfr=\klna, \]
where $\kappa\amalg\lambda$ is the abbreviation of
\[ F(E\amalg D)\cong F(E)\amalg F(D)\overset{\kappa\amalg\lambda}{\longrightarrow}X\amalg Y. \]
\end{dfn}

\begin{lem}\label{Lemkl}
Let $(\efr,\dfr)\in\Ob(\C_X\times\C_Y)$ be any object. If we denote the inclusions in $\Gs$ by 
\[ \iota_E\colon E\hookrightarrow E\amalg D,\quad \iota_D\colon D\hookrightarrow E\amalg D, \]
then we obtain morphisms
$R_{\iota_E}\in\C_X((\efr\amalg\dfr)_X,\efr)$ and $R_{\iota_D}\in\C_Y((\efr\amalg\dfr)_Y,\dfr)$.
\end{lem}
\begin{proof}
By the commutativity of the diagram
\begin{eqnarray*}
&\xy
(-24,8)*+{F(E\amalg D)\ \ }="0";
(-15,8)*+{\cong}="1";
(-2,8)*+{F(E)\amalg F(D)}="2";
(-12,-4)*+{}="3";
(24,8)*+{X\amalg Y}="4";
(-12,-10)*+{}="5";
(-24,-8)*+{F(E)}="6";
(24,-8)*+{X}="8";
{\ar^<<<<{\kappa\amalg\lambda} "2";"4"};
{\ar^{R_{\iota_X}} "4";"8"};
{\ar_{F(R_{\iota_E})} "0";"6"};
{\ar^<<<<<<{R_{\iota_{F(E)}}} "2";"6"};
{\ar_{\kappa} "6";"8"};
{\ar@{}|\circlearrowright "4";"5"};
{\ar@{}|\circlearrowright "0";"3"};
\endxy
&\\
&(\iota_{F(E)}\colon F(E)\hookrightarrow F(E)\amalg F(D)\ \text{is the inclusion in}\ \Hs)&
\end{eqnarray*}
in $\U_H$, we obtain $R_{\iota_E}\in\C_X((\efr\amalg\dfr)_X,\efr)$. Similarly for $R_{\iota_D}$.
\end{proof}

As a corollary of Lemma \ref{Lemkl}, we obtain commutative diagrams in $\Sett$
\begin{equation}\label{EqStarStar}
\xy
(-14,7)*+{\T_X((\efr\amalg\dfr)_X)}="0";
(13,7)*+{\T_X(\efr)}="2";
(0,-6)*+{L_F\T(X)}="4";
(0,10)*+{}="5";
{\ar^>>>>>>{\T_X(R_{\iota_E})} "0";"2"};
{\ar_{\delta_{X,(\efr\amalg\dfr)_X}} "0";"4"};
{\ar^{\delta_{X,\efr}} "2";"4"};
{\ar@{}|\circlearrowright "4";"5"};
\endxy
,\qquad
\xy
(-14,7)*+{\T_Y((\efr\amalg\dfr)_Y)}="0";
(13,7)*+{\T_Y(\dfr)}="2";
(0,-6)*+{L_F\T(Y)}="4";
(0,10)*+{}="5";
{\ar^>>>>>>{\T_Y(R_{\iota_D})} "0";"2"};
{\ar_{\delta_{Y,(\efr\amalg\dfr)_Y}} "0";"4"};
{\ar^{\delta_{Y,\dfr}} "2";"4"};
{\ar@{}|\circlearrowright "4";"5"};
\endxy
.
\end{equation}

\begin{claim}\label{Claimtau}
Let $\tau\colon\C_X\times\C_Y\rightarrow\C_{X\amalg Y}$ be the functor defined as follows.
\begin{itemize}
\item[-] For any $\efr=\kna\in\Ob(\C_X)$ and $\dfr=\lna\in\Ob(\C_Y)$, define $\tau(\efr,\dfr)$ by $\tau(\efr,\dfr)=\efr\amalg\dfr$.
\item[-] For any $a\in\C_X(\efr,\efr\ppr)$ and $b\in\C_Y(\dfr,\dfr\ppr)$, define $\tau(a,b)$ by
\[ \tau(a,b)=a\amalg b\colon\efr\amalg\dfr\rightarrow\efr\ppr\amalg\dfr\ppr. \]
\end{itemize}
Then $\tau$ is a final functor in the sense of \cite{MacLane}. Namely, the comma category $(\sfr\downarrow\tau)$ is non-empty and connected, for any $\sfr\in\Ob(\C_{X\amalg Y})$.
\end{claim}

If Claim \ref{Claimtau} is shown, then Claim \ref{ClaimAdd} follows. In fact if $\tau$ is final, then by \cite{MacLane}, the unique map
\[ h\in\Sett(Z,L_F\T(X\amalg Y)) \]
which makes the following diagram commutative for any $(\efr,\dfr)\in\Ob(\C_X\times\C_Y)$, becomes an isomorphism.
\begin{equation}
\label{EqStarStarStar}
\xy
(-26.4,6)*+{(\T_X\ast\T_Y)(\efr,\dfr)}="0";
(-13,6)*+{=}="1";
(0,6)*+{\T_X(\efr)\times\T_Y(\dfr)}="2";
(13.5,6)*+{\cong}="3";
(26,6)*+{\T_{X\amalg Y}(\efr\amalg\dfr)}="4";
(-20,-6)*+{Z}="6";
(20,-6)*+{L_F\T(X\amalg Y)}="8";
{\ar_{\delta_{(\efr,\dfr)}} "0";"6"};
{\ar^{\delta_{X\amalg Y,\efr\amalg\dfr}} "4";"8"};
{\ar^<<<<<<<<<<<<{\cong}_<<<<<<<<<<<<{h} "6";"8"};
{\ar@{}|\circlearrowright "0";"8"};
\endxy
\end{equation}

From $(\ref{EqStarStar}),(\ref{EqStarStarStar})$ and the definition of $L_F\T(R_{\iota_X})$, we obtain a commutative diagram
\[
\xy
(-20,18)*+{(\T_X\ast\T_Y)(\efr,\dfr)}="0";
(0,8)*+{\T_{X\amalg Y}(\efr\amalg\dfr)}="4";
(18,26)*+{}="5";
(-19,-1)*+{Z}="6";
(2,-8)*+{L_F\T(X\amalg Y)}="8";
(24,8)*+{\T_X((\efr\amalg\dfr)_X)}="10";
(47,0)*+{}="11";
(44,18)*+{\T_X(\efr)}="12";
(32,-8)*+{L_F\T(X)}="14";
(40,-9)*+{,}="15";
{\ar_(0.45){\cong} "0";"4"};
{\ar_{\delta_{(\efr,\dfr)}} "0";"6"};
{\ar@/^1.20pc/^{\wp_{X,(\efr,\dfr)}} "0";"12"};
{\ar|*+{_{\delta_{X\amalg Y,\efr\amalg\dfr}}} "4";"8"};
{\ar^<<<<<<{\cong}_<<<<<{h} "6";"8"};
{\ar^{\T_X(R_{\iota_E})} "10";"12"};
{\ar|*+{_{\delta_{X,(\efr\amalg\dfr)_X}}} "10";"14"};
{\ar@/^0.60pc/^{\delta_X,\efr} "12";"14"};
{\ar_<<<<<<{L_F\T(R_{\iota_X})} "8";"14"};
{\ar@{=} "4";"10"};
{\ar@{}|\circlearrowright "4";"6"};
{\ar@{}|\circlearrowright "4";"5"};
{\ar@{}|\circlearrowright "8";"10"};
{\ar@{}|\circlearrowright "10";"11"};
\endxy
\]
for any $(\efr,\dfr)\in\Ob(\C_X\times\C_Y)$. Comparing with $(\ref{EqStar})$, we see that $\pi_X$ satisfies $\pi_X=L_F\T(R_{\iota_X})\circ h$, and thus
\[
\xy
(-12,7)*+{Z}="0";
(13,7)*+{L_F\T(X\amalg Y)}="2";
(0,-7)*+{L_F\T(X)}="4";
(0,10)*+{}="5";
{\ar^<<<<<<{h}_<<<<<<{\cong} "0";"2"};
{\ar_>>>>>>>>{\pi_X} "0";"4"};
{\ar^<<<<{L_F\T(R_{\iota_X})} "2";"4"};
{\ar@{}|\circlearrowright "4";"5"};
\endxy
\]
becomes commutative.
For $Y$, similarly $\pi_Y$ satisfies $L_F\T(R_{\iota_Y})\circ h=\pi_Y$. 
Thus we obtain a commutative diagram
\[
\xy
(-12,7)*+{Z}="0";
(13,7)*+{L_F\T(X\amalg Y)}="2";
(0,-7)*+{L_F\T(X)\times L_F\T(Y)}="4";
(16,-8)*+{.}="3";
(0,10)*+{}="5";
{\ar^<<<<<<{h}_<<<<<<{\cong} "0";"2"};
{\ar_>>>>>>>>{(\pi_X,\pi_Y)} "0";"4"};
{\ar^<<<<{(L_F\T(R_{\iota_X}),L_F\T(R_{\iota_Y}))} "2";"4"};
{\ar@{}|\circlearrowright "4";"5"};
\endxy
\]
Since $h$ and $(\pi_X,\pi_Y)$ are isomorphisms, it follows that
\[ (L_F\T(R_{\iota_X}),L_F\T(R_{\iota_Y}))\colon L_F\T(X\amalg Y)\rightarrow L_F\T(X)\times L_F\T(Y) \]
is an isomorphism for any $X,Y\in\Ob(\U_H)$, and Claim \ref{ClaimAdd} is shown.

\bigskip

Thus it remains to show Claim \ref{Claimtau}.
\begin{proof}[Proof of Claim \ref{Claimtau}]

Let $\sfr=\sna\in\Ob(\C_{X\amalg Y})$ be any object. 
Since the folding map $\nabla\colon S\amalg S\rightarrow S$ makes the diagram
\[
\xy
(-16,7)*+{F(S)}="0";
(16,7)*+{F(S\amalg S)}="2";
(0,-8)*+{X\amalg Y}="4";
(0,11)*+{}="5";
{\ar^(0.45){F(R_{\nabla})} "0";"2"};
{\ar_{\sigma} "0";"4"};
{\ar^{\sigma_X\amalg\sigma_Y} "2";"4"};
{\ar@{}|\circlearrowright "4";"5"};
\endxy
\]
in $\U_H$ commutative, this gives a morphism $R_{\nabla}\colon\sfr\rightarrow\sfr_X\amalg\sfr_Y$ in $\C_{X\amalg Y}$. Thus $(\sfr\downarrow\tau)$ is non-empty.

Moreover, let $(\efr,\dfr)\in\Ob(\C_X\times\C_Y)$ be any object, where
\[ \efr=\kna,\quad \dfr=\lna, \]
and let $a\in\C_{X\amalg Y}(\sfr,\efr\amalg\dfr)$ be any morphism.
Denote the inclusions by
\[ \iota_E\colon E\hookrightarrow E\amalg D,\quad \iota_D\colon D\hookrightarrow E\amalg D, \]
and put
\begin{eqnarray*}
a_E&=&R_{\iota_E}\circ a\ \ \in\U_G(S,E),\\
a_D&=&R_{\iota_D}\circ a\ \ \in\U_G(S,D).
\end{eqnarray*}
Then, by the commutativity of the diagram
\[
\xy
(-16,8)*+{F(S)}="0";
(16,8)*+{X\amalg Y}="2";
(0,11)*+{}="3";
(2,-2)*+{F(E\amalg D)}="4";
(-11,-17)*+{}="5";
(20,-10)*+{X}="6";
(3,-3)*+{}="7";
(6,-20)*+{F(E)}="8";
{\ar^{\sigma} "0";"2"};
{\ar_{F(a)} "0";"4"};
{\ar_{\kappa\amalg\lambda} "4";"2"};
{\ar^{R_{\iota_X}} "2";"6"};
{\ar^{F(R_{\iota_E})} "4";"8"};
{\ar_{\kappa} "8";"6"};
{\ar@/_1.60pc/_{F(a_E)} "0";"8"};
{\ar@{}|\circlearrowright "3";"4"};
{\ar@{}|\circlearrowright "4";"5"};
{\ar@{}|\circlearrowright "7";"6"};
\endxy
\]
in $\U_H$, we obtain a morphism $a_E\in\C_X(\sfr_X,\efr)$.
Similarly we obtain $a_D\in\C_Y(\sfr_Y,\dfr)$, and thus a morphism $(a_E,a_D)\colon (\sfr_X,\sfr_Y)\rightarrow (\efr,\dfr)$ in $\C_X\times\C_Y$.

Now there are three morphisms in $\C_{X\amalg Y}$
\begin{eqnarray*}
R_{\nabla}&\colon& \sfr\longrightarrow \sfr_X\amalg\sfr_Y=\tau(\sfr_X,\sfr_Y),\\
a&\colon& \sfr\longrightarrow \efr\amalg\dfr=\tau(\efr,\dfr),\\
a_E\amalg a_D=\tau(a_E,a_D)&\colon& \tau(\sfr_X,\sfr_Y)\longrightarrow \tau(\efr,\dfr),
\end{eqnarray*}
and the commutativity of the diagram in $\U_G$
\[
\xy
(-12,7)*+{S}="0";
(12,7)*+{S\amalg S}="2";
(0,-8)*+{E\amalg D}="4";
(0,11)*+{}="5";
{\ar^>>>>>>>{R_{\nabla}} "0";"2"};
{\ar_{a} "0";"4"};
{\ar^{a_E\amalg a_D} "2";"4"};
{\ar@{}|\circlearrowright "4";"5"};
\endxy
\]
implies the compatibility of these morphisms.
\[
\xy
(-10,7)*+{\sfr}="0";
(11,7)*+{\tau(\sfr_X,\sfr_Y)}="2";
(0,-6)*+{\tau(\efr,\dfr)}="4";
(0,10)*+{}="5";
{\ar^>>>>>>{R_{\nabla}} "0";"2"};
{\ar_{a} "0";"4"};
{\ar^>>>>>>>{\tau(a_E,a_D)} "2";"4"};
{\ar@{}|\circlearrowright "4";"5"};
\endxy
\]
Thus for any $(\sfr\overset{a}{\rightarrow}\tau(\efr,\dfr))\in\Ob((\sfr\downarrow\tau))$, there exists a morphism from $(\sfr\overset{R_{\nabla}}{\rightarrow}\tau(\sfr_X,\sfr_Y))$ to $(\sfr\overset{a}{\rightarrow}\tau(\efr,\dfr))$ in $(\sfr\downarrow\tau)$. In particular, $(\sfr\downarrow\tau)$ is connected.
\end{proof}


\begin{thebibliography}{9}                                                      


\bibitem{Borceux} Borceux, F.: \emph{Handbook of categorical algebra. 2. Categories and structures}. Encyclopedia of Mathematics and its Applications, \textbf{51}. Cambridge University Press, Cambridge, (1994). xviii+443.

\bibitem {Bouc_Biset}Bouc, S.: \emph{Biset functors for finite groups}, Lecture Notes in Mathematics, 1990, Springer-Verlag, Berlin (2010).

\bibitem {Bouc}Bouc, S.: \emph{Green functors and $G$-sets}, Lecture Notes in Mathematics, 1671, Springer-Verlag, Berlin (1997).







\bibitem{MacLane} Mac Lane, S.: \emph{Categories for the working mathematician}. Second edition. Graduate Texts in Mathematics, \textbf{5}. Springer-Verlag, New York, (1998). xii+314 pp.


\bibitem{N_TamMack} Nakaoka, H.: \emph{Tambarization of a Mackey functor and its application to the Witt-Burnside construction}. Adv. in Math. \textbf{227} (2011) 2107--2143.

\bibitem{N_IdealTam} Nakaoka, H.: \emph{Ideals of Tambara functors}. Adv. in Math. \textbf{230} (2012) 2295--2331.

\bibitem{N_FracTam} Nakaoka, H.: \emph{On the fractions of semi-Mackey and Tambara functors}. J. of Alg. \textbf{352} (2012) 79--103.




\bibitem{Tam} Tambara, D.: \emph{On multiplicative transfer}. Comm. Algebra \textbf{21} (1993), no. 4, 1393--1420.


\end{thebibliography}
\end{document}